\documentclass[draft,a4]{amsart}
\synctex=1
\usepackage{amsmath,amssymb,amsthm}
\usepackage{ascmac}
\usepackage[nobysame]{amsrefs}
\usepackage{amscd}
\usepackage[margin=28mm]{geometry}
\usepackage{enumerate}
\usepackage{graphics}

\usepackage{color}
\usepackage{ulem}

\newtheorem{Thm}{Theorem}[section]
\newtheorem{Pro}[Thm]{Proposition}
\newtheorem{Lem}[Thm]{Lemma}

\theoremstyle{definition}
\newtheorem{Def}[Thm]{Definition}

\newtheorem{Rem}{Remark}

\newcommand{\INN}[2]{\langle #1, #2 \rangle}

\title[Special Lagrangian submanifolds and cohomogeneity one actions on $\mathbb{C}P^{n}$]{Special Lagrangian submanifolds and cohomogeneity one actions on the complex projective space}
\date{\today}

\author{Masato Arai}
\address{Faculty of Science, Yamagata University, Yamagata 990-8560, Japan}
\email{arai@sci.kj.yamagata-u.ac.jp}

\author{Kurando Baba}
\address{Department of Mathematics, Faculty of Science and Technology, Tokyo University of Science, Noda, Chiba 278-8510, Japan}
\email{baba$\_$kurando@ma.noda.tus.ac.jp}

\keywords{special Lagrangian submanifold, Calabi-Yau manifold, calibration, cohomogeneity one action}
\subjclass[2010]{Primary~53C38}

\begin{document}

\maketitle

\begin{abstract}
We construct 
examples of cohomogeneity one special Lagrangian submanifolds
in the cotangent bundle over the complex projective space,
whose Calabi-Yau structure was given by Stenzel.
For each example,
we describe the condition of special Lagrangian
as an ordinary differential equation.
Our method is based on a moment map technique and
the classification of cohomogeneity one actions
on the complex projective space classified by Takagi.
\end{abstract}

\section*{Introduction}
Calibrated submanifolds, which are a special class of minimal submanifolds, were firstly introduced by Harvey and Lawson (\cite{HaLa}). Their study has led to extensive research on calibrated geometry both in mathematics and physics. One of most interesting  calibrated submanifolds are special Lagrangian submanifolds in Calabi-Yau manifolds since they play an important role in understanding of mirror symmetry in string theory (\cite{SYZ}). It is expected that explicit construction of special Lagrangian submanifolds promote precise understanding of mirror symmetry and related problems.

There have been a lot of progress of constructing special Lagrangian submanifolds since Harvey and Lawson introduced them. 
One of useful techniques to construct such submanifolds is a moment map, which was introduced by Joyce (\cite{Jo}).
In order to use the moment map, one needs to impose large symmetry groups on special Lagrangian manifolds. The advantage of this technique is that the condition for special Lagrangian submanifolds is reduced to an ordinary differential equation which is explicitly solved. With the use of this technique, Joyce constructed special Lagrangian submanifolds in $\mathbb{C}^{n}(\cong T^{*}\mathbb{R}^{n})$ invariant under a subgroup of $SU(m)$ which is cohomogeneity one symmetry group. 
The Joyce's method was applied to construction of special Lagrangian submanifolds of Calabi-Yau manifolds on the cotangent bundles over rank one compact Riemannian symmetric spaces, given by Stenzel (\cite{St}, \cite{St93}). Explicit examples of such a class, especially cohomogeneity one special Lagrangian manifolds, were studied in
$T^{*}S^{n}$ over the sphere $S^{n}$ by Anciaux (\cite{An}),
Ionel and Min-Oo (\cite{IoMO}), and Hashimoto and Sakai (\cite{HS}).
Another method, called a bundle technique introduced by Harvey and Lawson (\cite{HaLa}), was also considered to construct special Lagrangian submanifolds of $T^{*}S^{n}$ by Karigiannis and Min-Oo (\cite{KaMO}) and $T^{*}\mathbb{C}P^{n}$ by Ionel and Ivey (\cite{IoIv}). While special Lagrangian submanifolds of $T^{*}S^{n}$ were studied by both the moment map technique and the bundle technique, ones of $T^{*}\mathbb{C}P^{n}$ were considered only by the latter.

In this paper, we construct cohomogeneity one special Lagrangian submanifolds
in $T^{*}\mathbb{C}P^{n}$ over the complex projective space $\mathbb{C}P^{n}$
by using the moment map technique. For this purpose we use the classification of 
cohomogeneity one actions on $\mathbb{C}P^{n}$ which was given by Takagi (\cite{T}).
The moment map technique with the above classification allows us to construct nontrivial special 
Lagrangian submanifolds which are realized as extension of homogeneous hypersurfaces 
in $\mathbb{C}P^{n}$ to $n$-dimensional submanifolds in $T^{*}\mathbb{C}P^{n}$.

The organization of this paper is as follows.
In Section \ref{sec:preliminaries},
we review
the notions of Calabi-Yau manifolds and special Lagrangian submanifolds,
and describe the Calabi-Yau structure on
$T^{*}\mathbb{C}P^{n}$ given by Stenzel (\cite{St}).
We explain Hashimoto-Sakai's method (\cite{HS}) in which
Joyce's moment map technique is applied to construct cohomogeneity 
one special Lagrangian submanifolds in $T^{*}S^{n}$.
We also explain the geometry of cohomogeneity one actions
on $\mathbb{C}P^{n}$ given by Takagi (\cite{T}) and some related notions.
In Section \ref{sec:constSLag},
we apply Hashimoto-Sakai's method to the case of $T^{*}\mathbb{C}P^{n}$.
Finally we construct cohomogeneity one special Lagrangian submanifolds
in $T^{*}\mathbb{C}P^{n}$
(Theorems \ref{thm:slagAIIIAIII}--\ref{thm:slagDIII}).

\section{Preliminaries}\label{sec:preliminaries}
\subsection{The Calabi-Yau structure on $T^{*}\mathbb{C}P^{n}$}

We review the notion of Calabi-Yau manifold (cf.~\cite{Jotext}).

\begin{Def}
An \textit{almost Calabi-Yau manifold} is a quadruple $(M,J,\omega,\Omega)$
such that $(M,J,\omega)$ is a K\"ahler manifold of complex dimension $n(\geq 2)$ with a complex structure $J$ and a K\"aher 2-form $\omega$,
and a non-vanishing holomorphic $(n,0)$-form $\Omega$ on $M$.
In addition,
if $\omega$ and $\Omega$ satisfy the following condition,
then $(M,J,\omega,\Omega)$ is called a \textit{Calabi-Yau manifold}:
\begin{equation}\label{eqn:cycondi}
\dfrac{\omega^{n}}{n!}=(-1)^{n(n-1)/2}\left(\dfrac{i}{2}\right)^{n}\Omega\wedge\bar{\Omega}.
\end{equation}
\end{Def}

\noindent
A metric $g$ on a Calabi-Yau manifold $(M,J,\omega,\Omega)$ is 
constructed from $J$ and $\omega$
by using the equation $\omega(v,w)=g(Jv,w)$ 
for all vector fields $v, w$ on $M$.
The simplest example of a Calabi-Yau manifold is a
complex Euclidean space
$\mathbb{C}^{n}$ equipped with 
the flat metric $g_0$, the K\"ahler form $\omega_0$ and the holomorphic volume form $\Omega_0$.
For complex coordinates $(z_{1}, \dotsc, z_{n})$, they are given as
\begin{equation*}
g_{0}=|dz_{1}|^{2}+\dotsm+|dz_{n}|^{2},\quad
\omega_{0}=\dfrac{i}{2}(dz_{1}\wedge d\bar{z}_{1}+\dotsm+dz_{n}\wedge d\bar{z}_{n}),\quad
\Omega_{0}=dz_{1}\wedge\dotsm\wedge dz_{n}.
\end{equation*}

In this paper, we consider a nontrivial Calabi-Yau manifold, 
the cotangent bundle $T^{*}\mathbb{C}P^{n}$ over
the $n$-dimensional complex projective space $\mathbb{C}P^{n}$.
The base manifold belongs to a class of rank one compact Riemannian symmetric spaces.
On the cotangent bundles over these spaces Ricci-flat K\"ahler metrics were constructed (\cite{St}, \cite{St93}), 
which are called the \textit{Stenzel metric}.
We utilize a convenient description of the Stenzel metric of $T^*\mathbb{C}P^{n}$
with local coordinates on $\mathbb{C}P^{n}\times\mathbb{C}P^{n}$ (\cite{Le}). 
For this purpose, we choose the base manifold as
the standard Fubini-Study metric on $\mathbb{C}P^{n}$
and identify $T^{*}\mathbb{C}P^{n}$
with the tangent bundle $T\mathbb{C}P^{n}$.

Let us start with $B=\{(\zeta,\xi)\in\mathbb{C}^{n+1}\times\mathbb{C}^{n+1}\mid\zeta\neq0,\xi\cdot\bar{\zeta}=0\}$,
where the dot product $\cdot$ is $\mathbb{C}$-bilinear:
$\xi \cdot \bar{\zeta}=\xi_{0}\bar{\zeta}_{0}+ \dotsb +\xi_{n}\bar{\zeta}_{n}$ for $\zeta=(\zeta_{0}, \dotsc, \zeta_{n})$ and
$\xi=(\xi_{0}, \dotsc, \xi_{n})$.
From this we have $T\mathbb{C}P^{n}\cong B/\mathbb{C}^{*}$ (\cite{IoIv}), 
where $\mathbb{C}^{*}:=\mathbb{C}-\{0\}$
and the $\mathbb{C}^{*}$-action on $B$
is defined by $(\zeta,\xi)\mapsto (\lambda\zeta,\lambda\xi)$
for $(\zeta,\xi)\in B$ and $\lambda\in\mathbb{C}^{*}$.

To describe $T\mathbb{C}P^{n}$ with local coordinate on $\mathbb{C}P^{n}\times\mathbb{C}P^{n}$,
we define the following mapping $\hat{\Phi}$:
\begin{equation*}
\hat{\Phi}:B\to\mathbb{C}^{n+1}\times\mathbb{C}^{n+1};
(\zeta,\xi)\mapsto 
((\cosh\mu)\zeta+i\dfrac{\sinh\mu}{\mu}\xi;(\cosh\mu)\bar{\zeta}+i\dfrac{\sinh\mu}{\mu}\bar{\xi}),\quad\mu:=\dfrac{|\xi|}{|\zeta|}.
\end{equation*}
We shall write elements in $\mathbb{C}^{n+1}\times\mathbb{C}^{n+1}$
as an ordered pair of row vectors in $\mathbb{C}^{n+1}$
separated by a semicolon.
It is then seen that, taking account of $\mathbb{C}^{*}$-action on $B$ and each factor
in $\mathbb{C}^{n+1}\times\mathbb{C}^{n+1}$,
this mapping induces an embedding $\Phi:T\mathbb{C}P^{n} \to \mathbb{C}P^{n}\times\mathbb{C}P^{n}$ (see \cite{Sz}).
By using this map,
we define a manifold $\mathcal{M}=\Phi(T\mathbb{C}P^{n}),$
in which its complex structure induces a complex structure $J_{Stz}$ on $T\mathbb{C}P^{n}$ via $\Phi$.
The mapping $\Phi$ allows us to describe the K\"ahler potential for the Stenzel metric on $T\mathbb{C}P^{n}$ by means of local coordinates on $\mathcal{M}\subset\mathbb{C}P^{n}\times\mathbb{C}P^{n}$  (\cite{Le}):
\begin{equation*}
\mathcal{A}(z;w)=\sum_{j,k=0}^{n}|z_{j}w_{k}|^{2},\quad
\mathcal{B}(z;w)=\sum_{j=0}^{n}z_{j}w_{j}\neq 0,\quad
\mathcal{N}(z;w)=\dfrac{\mathcal{A}(z;w)}{|\mathcal{B}(z;w)|^{2}},
\end{equation*}
where $(z;w)=(z_{0},\ldots,z_{n};w_{0},\ldots,w_{n})$ be the homogeneous
coordinates on $\mathbb{C}P^{n}\times\mathbb{C}P^{n}$.
If $f$ is a solution of
$(2\mathcal{N})\mathcal{N}^{n-1}(f')^{2n}+
2(\mathcal{N}-1)\mathcal{N}^{n}(f')^{2n}f''=1,$
then $\varrho_{Stz}:=f(\mathcal{N})$ gives
the K\"ahler potential for the Stenzel metric.
Then $\varrho_{Stz}$ is invariant under the action
of $SU(n+1)$ defined by
$(z;w)\mapsto (gz;\bar{g}w)$
for $g\in SU(n+1)$ and $(z;w)\in\mathcal{M}(\subset \mathbb{C}P^{n}\times\mathbb{C}P^{n})$.

The K\"ahler potential $f(\mathcal{N})$ leads to
the Liouville $1$-form $\alpha_{Stz}(:=\mathrm{Im}(\bar{\partial}f(\mathcal{N})))$: 
\begin{equation}\label{eqn:alphastz}
\begin{split}
\alpha_{Stz} 
=\dfrac{f'(\mathcal{N})}{2i}\sum_{j=0}^{n}\Bigg[\left\{
\dfrac{z_{j}|w|^{2}}{|\mathcal{B}|^{2}}-
\dfrac{\mathcal{A}}{|\mathcal{B}|^{2}}\dfrac{\bar{w}_{j}}{\bar{\mathcal{B}}}
\right\}&d\bar{z}_{j}+
\left\{
\dfrac{|z|^{2}w_{j}}{|\mathcal{B}|^{2}}-
\dfrac{\mathcal{A}}{|\mathcal{B}|^{2}}\dfrac{\bar{z}_{j}}{\bar{\mathcal{B}}}
\right\}d\bar{w}_{j}\\
&-
\left\{
\dfrac{\bar{z}_{j}|w|^{2}}{|\mathcal{B}|^{2}}-
\dfrac{\mathcal{A}}{|\mathcal{B}|^{2}}\dfrac{w_{j}}{\mathcal{B}}
\right\}dz_{j}-
\left\{
\dfrac{|z|^{2}\bar{w}_{j}}{|\mathcal{B}|^{2}}-
\dfrac{\mathcal{A}}{|\mathcal{B}|^{2}}\dfrac{z_{j}}{\mathcal{B}}
\right\}dw_{j}
\Bigg].
\end{split}
\end{equation}
Note that the K\"ahler $2$-form $\omega_{Stz}$
satisfies $\omega_{Stz}=-d\alpha_{Stz}$.
A simple calculation by means of (\ref{eqn:alphastz})
shows that $\mathcal{L}_{X^{*}}\alpha_{Stz}=0$ holds
for $X\in \mathfrak{su}(n+1)$,
where $\mathcal{L}_{X^{*}}$
denotes the Lie derivative with respect to
the fundamental vector filed $X^{*}$ of $X\in\mathfrak{su}(n+1)$ defined by
$X^{*}_{(z;w)}=(d/dt)|_{t=0}(\exp(t X)z;\overline{\exp(t X)}w)$
for $(z;w)\in\mathcal{M}$.

The above results are readily described in terms of
the inhomogeneous coordinates
\begin{equation*}
(\tilde{z};\tilde{w})=(\tilde{z}_{1},\ldots,\tilde{z}_{n};
\tilde{w}_{1},\ldots,\tilde{w}_{n}),\quad
\tilde{z}_{i}=\dfrac{z_{i}}{z_{0}},\quad
\tilde{w}_{i}=\dfrac{w_{i}}{w_{0}},
\end{equation*}
which leads to
$\mathcal{A}=(1+|\tilde{z}|^{2})(1+|\tilde{w}|^{2})$
and $\mathcal{B}=1+\tilde{z}\cdot\tilde{w}$.
Since
we obtain
$\det\partial\bar{\partial}f(\mathcal{N})=(1/|\mathcal{B}|^{2})^{n+1}$
(cf.  \cite[p.~320]{Le}),
the holomorphic $(n,0)$-form
$\Omega_{Stz}$ defined by
\begin{equation}\label{eqn:omegastz}
\Omega_{Stz}=\dfrac{1}{\mathcal{B}^{n+1}}d\tilde{z}_{1}\wedge\dotsm\wedge d\tilde{z}_{n}
\end{equation}
satisfies the condition (\ref{eqn:cycondi}).
Hence $(T\mathbb{C}P^{n}\cong\mathcal{M},J_{Stz},\omega_{Stz},\Omega_{Stz})$
is a Calabi-Yau manifold.

\begin{Rem}
Stenzel gave a Calabi-Yau structure
$(J_{Stz}',\omega_{Stz}',\Omega_{Stz}')$
of $T^{*}S^{n}$ as follows:
\begin{itemize}
\item The complex structure $J_{Stz}'$ on $T^{*}S^{n}$,
which is identified with 
$TS^{n}(=\{(x,\xi)\in\mathbb{R}^{n+1}\times\mathbb{R}^{n+1}\mid |x|=1,\INN{x}{\xi}=0\})$,
is induced from $Q^{n}(:=\{(z_{0},\dotsb,z_{n})\in\mathbb{C}^{n+1}\mid
z_{0}^{2}+\dotsb+z_{n}^{n}=1\})$ via the following diffeomorphism $\Phi'$
(see \cite{Sz}):
\begin{equation*}
\Phi':TS^{n}\to Q^{n};
(x,\xi)\mapsto\cosh(|\xi|)x+i\dfrac{\sinh(|\xi|)}{|\xi|}\xi.
\end{equation*}
\item The K\"ahler 2-form
$\omega_{Stz}'=(i/2)\partial\bar{\partial}\varrho_{Stz}'$,
where $\varrho_{Stz}'=f(r^{2})$,
is the K\"ahler potential
with $r^{2}=\sum_{j=0}^{n}|z_{j}|^{2}$
and a solution $f$ of
$r(f')^{n}+(r^{2}-1)(f')^{n-1}f''=1$.
\item The holomorphic $(n,0)$-form $\Omega_{Stz}'$ is defined by
\begin{equation*}
\Omega_{Stz}'(v_{1},\ldots,v_{n})=(dz_{0}\wedge dz_{1}\wedge \dotsm\wedge dz_{n})
(Z,v_{1},\ldots,v_{n}),
\end{equation*}
where $Z=z_{0}(\partial/\partial z_{0})+z_{1}(\partial/\partial z_{1})+\dotsb+z_{n}(\partial/\partial z_{n})$.
\end{itemize}
\end{Rem}

\subsection{Special Lagrangian submanifolds in a Calabi-Yau manifold}

We 
explain
the notion of special Lagrangian submanifold introduced by Harvey-Lawson (\cite{HaLa}).

\begin{Def}
Let $(M,J,\omega,\Omega)$ be a Calabi-Yau manifold of complex dimension $n$.
A submanifold $L$ of real dimension $n$
in $M$ is called a \textit{special Lagrangian submanifold} with phase $\psi(\in\mathbb{R})$ if
it is calibrated by the calibration $\mathrm{Re}(e^{i\psi}\Omega)$.
\end{Def}

\noindent
Special Lagrangian submanifolds
are homologically volume-minimizing in Calabi-Yau manifolds (see \cite{HaLa}).
The following result is useful to check
the condition of special Lagrangian
in our argument.

\begin{Pro}[Corollary III.1.11 in \cite{HaLa}]\label{pro:HLpro}
$L$ is a special Lagrangian submanifold with phase $\psi$
in a Calabi-Yau manifold $(M,J,\omega,\Omega)$
if and only if $L$ is a real $n$-dimensional submanifold satisfying two conditions {\rm (i)} $\omega|_{L}\equiv0$
and {\rm (ii)} $\mathrm{Im}(e^{i\psi}\Omega)|_{L}\equiv0$.
\end{Pro}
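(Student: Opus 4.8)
The plan is to reduce the equivalence to a pointwise statement on each tangent space $T_xL$ and then to the flat model $(\mathbb{C}^{n},\omega_{0},\Omega_{0})$. First I would record that $\mathrm{Re}(e^{i\psi}\Omega)$ is a calibration: since $\Omega$ is a holomorphic $(n,0)$-form it is closed ($\partial\Omega$ vanishes for degree reasons and $\bar\partial\Omega=0$ by holomorphicity), so $\mathrm{Re}(e^{i\psi}\Omega)$ is a closed $n$-form, and the Calabi-Yau normalization \eqref{eqn:cycondi} is precisely what forces its comass with respect to $g$ to equal one. By definition $L$ is calibrated by $\mathrm{Re}(e^{i\psi}\Omega)$ exactly when $\mathrm{Re}(e^{i\psi}\Omega)|_{T_xL}$ equals the Riemannian volume form of $T_xL$ for every $x\in L$, so the whole proposition follows once I characterize, for a single oriented real $n$-plane, when this equality holds. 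At each $x$ I would choose a unitary coframe identifying $(T_xM,J,g,\omega)$ with $(\mathbb{C}^{n},J_{0},g_{0},\omega_{0})$; by \eqref{eqn:cycondi} the form $\Omega_x$ then becomes a unit-modulus multiple of $\Omega_{0}=dz_{1}\wedge\dots\wedge dz_{n}$, and a further $U(n)$-rotation of the frame (which fixes $g_{0}$ and $\omega_{0}$) makes this multiple equal to $1$. The problem thus becomes purely linear-algebraic on an oriented real $n$-plane $V\subset\mathbb{C}^{n}$.

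For the linear algebra, fix an oriented orthonormal basis $e_{1},\dots,e_{n}$ of $V$ and form the complex matrix $W=(W_{jk})$ with $W_{jk}=dz_{j}(e_{k})$, so that $\Omega_{0}(e_{1},\dots,e_{n})=\det W$. Since $g_{0}=\sum_{j}|dz_{j}|^{2}$, each column satisfies $\sum_{j}|W_{jk}|^{2}=|e_{k}|^{2}=1$, and Hadamard's inequality gives $|\det W|\le 1$. Hence $\mathrm{Re}(e^{i\psi}\Omega_{0})|_{V}=\mathrm{Re}(e^{i\psi}\det W)\le|\det W|\le 1=\mathrm{vol}_{V}$, which is the calibration inequality and shows the comass is one.

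It remains to analyze the two inequalities above and match them with (i) and (ii). Writing the Hermitian product as $h(e_{k},e_{l})=\sum_{j}W_{jk}\overline{W_{jl}}=(W^{*}W)_{lk}$, orthonormality gives $\mathrm{Re}\,h(e_{k},e_{l})=g_{0}(e_{k},e_{l})=\delta_{kl}$, while $\mathrm{Im}\,h(e_{k},e_{l})=-\omega_{0}(e_{k},e_{l})$; therefore $W^{*}W=I$ holds \emph{if and only if} $\omega_{0}|_{V}=0$. On the other hand, with unit columns Hadamard's inequality is an equality if and only if the columns of $W$ are Hermitian-orthonormal, i.e.\ $W^{*}W=I$. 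Thus $|\det W|=1$ is equivalent to condition (i). On the Lagrangian locus $\det W=e^{i\theta}$ has modulus one, so $\mathrm{Re}(e^{i\psi}\det W)=\cos(\psi+\theta)$ and $\mathrm{Im}(e^{i\psi}\det W)=\sin(\psi+\theta)$; then $\mathrm{Re}(e^{i\psi}\Omega_{0})|_{V}=\mathrm{vol}_{V}$ forces $\sin(\psi+\theta)=0$, which is (ii), while conversely (i) and (ii) together with the calibrating orientation give $\cos(\psi+\theta)=1$, i.e.\ equality. Running this equivalence over all $x\in L$ proves both directions of the proposition.

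The hard part will not be the linear algebra but making the pointwise reduction airtight: one must verify that \eqref{eqn:cycondi} really pins the comass of $\mathrm{Re}(e^{i\psi}\Omega)$ to one (so that ``calibration'' is legitimate) and that the frame-dependent phase removed by the $U(n)$-rotation never interferes with the fixed global phase $\psi$. A second, more routine point needing care is orientation bookkeeping in the equality case: ``calibrated'' presupposes that $L$ is oriented so that $\mathrm{Re}(e^{i\psi}\Omega)$ restricts to the \emph{positive} volume form, and fixing this sign is exactly what upgrades $\cos(\psi+\theta)=\pm1$ to $\cos(\psi+\theta)=+1$.
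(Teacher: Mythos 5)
The paper itself contains no proof of this proposition: it is imported verbatim as Corollary III.1.11 of \cite{HaLa}, so the only comparison available is with the classical source. Your argument is correct and is essentially Harvey--Lawson's own proof: pointwise reduction to the flat model $(\mathbb{C}^{n},\omega_{0},\Omega_{0})$, where the normalization \eqref{eqn:cycondi} is what guarantees that $\Omega_{x}$ equals a unit-modulus multiple of $\Omega_{0}$ in a unitary coframe; then the Hadamard inequality $|\det W|\le 1$ for the matrix $W_{jk}=dz_{j}(e_{k})$, whose equality case $W^{*}W=I$ is exactly the Lagrangian condition $\omega_{0}|_{V}=0$; and finally the phase analysis $e^{i\psi}\det W=\pm 1$ in the equality locus. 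The two delicate points you flag are genuinely handled by your own argument: the $U(n)$ frame rotation only absorbs the frame-dependent phase of $\Omega_{x}$ and never touches the global constant $e^{i\psi}$, since the condition $\mathrm{Re}(e^{i\psi}\Omega)|_{T_{x}L}=\mathrm{vol}_{T_{x}L}$ being tested is frame-independent; and in the converse direction, conditions (i)--(ii) make $\mathrm{Re}(e^{i\psi}\Omega)|_{L}$ a nowhere-vanishing $n$-form on $L$, so it induces the orientation with respect to which $L$ is calibrated, which is precisely what upgrades $\cos(\psi+\theta)=\pm 1$ to $+1$.
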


\noindent
Note that the the condition (i) means that
$L$ is a Lagrangian submanifold in $M$.

\subsection{Review on Hashimoto-Sakai's method}\label{subsec:HSmethod}

A purpose of this paper is to construct
special Lagrangian submanifolds
with the use of a moment map technique.
In particular, we use a method developed by
Hashimoto and Sakai (\cite{HS}).
They constructed special Lagrangian submanifolds
in the cotangent bundle $T^{*}S^{n}$ over the $n$-sphere $S^{n}$
by using cohomogeneity one actions on $S^{n}$,
which is an application
of Joyce's method proposed in \cite{Jo}.
In this subsection, we review on the Hashimoto-Sakai's method.

Let $(M,\omega)$
be a symplectic manifold
and $K$ be a connected Lie group
with Lie algebra $\mathfrak{k}$.
The dual space of $\mathfrak{k}$ is denoted by $\mathfrak{k}^{*}$ and
the pairing of $\mathfrak{k}$ and $\mathfrak{k}^{*}$
is given by $\INN{}{}$.
Let $\rho$ be a Lie homomorphism from $K$ to the diffeomorphism group of $M$.
Then we obtain a $K$-action on $M$ by $(k,x)\mapsto \rho(k)x$
for $k\in K$ and $x\in M$.
The fundamental vector field $X^{*}$ of $X\in \mathfrak{k}$ on $M$
is expressed by
\begin{equation*}
X^{*}_{x}=\dfrac{d}{dt}\biggm|_{t=0}\rho(\exp(t Z))x
=\rho_{*}(X)x
\quad
(x\in M),
\end{equation*}
where $\rho_{*}$ denotes the differential of $\rho$
at the identity element in $K$.
It follows from Cartan's magic formula
that,
if $\omega$
is invariant under the $K$-action,
then $\iota(X^{*})\omega$
is a closed 1-form on $M$ for $X\in \mathfrak{k}$,
where $\iota$ denotes the interior product on $M$.
The $K$-action on $M$
is called \textit{Hamiltonian}
if the 1-form $\iota(X^{*})\omega$ on $M$ is exact for $X\in \mathfrak{k}$.
Suppose that the $K$-action on $M$ is Hamiltonian.
Then there exists a $K$-equivariant map
$\mu:M \to \mathfrak{k}^{*}$,
which is called a \textit{moment map},
such that $\iota(X^{*})\omega=d\mu_{X}$ holds
for $X\in \mathfrak{k}$,
where $\mu_{X}\in C^{\infty}(M)$ is defined by $\mu_{X}(x)=\INN{\mu(x)}{X}$
for $x\in M$.

Let $\mu$ be a moment map for the $K$-action on $M$,
and $\mu^{-1}(c)$ denotes the inverse image of $\mu$ at $c\in \mathfrak{k}^{*}$.  
Set $Z(\mathfrak{k}^{*})=\{X\in\mathfrak{k}^{*}\mid
\mathrm{Ad}^{*}(k)X=X\,(\forall k\in K)\}$, where $\mathrm{Ad}^{*}$ is the coadjoint
representation of $K$ on $\mathfrak{k}^{*}$.
\begin{Lem}
For any $c\in \mathfrak{k}^{*}$,
the inverse image $\mu^{-1}(c)(\subset M)$
is $K$-invariant if and only if
$c\in Z(\mathfrak{k}^{*})$.
\end{Lem}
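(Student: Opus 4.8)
The plan is to derive the equivalence entirely from the defining property of a moment map recorded in its construction above, namely the $K$-equivariance
\[
\mu(\rho(k)x)=\mathrm{Ad}^{*}(k)\mu(x)\qquad(k\in K,\ x\in M).
\]
Both implications then reduce to tracking where the coadjoint action moves the value $c$, so no analytic input beyond this identity is needed.

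For the ``if'' direction I would assume $c\in Z(\mathfrak{k}^{*})$ and take an arbitrary $x\in\mu^{-1}(c)$. For every $k\in K$ the equivariance gives $\mu(\rho(k)x)=\mathrm{Ad}^{*}(k)\mu(x)=\mathrm{Ad}^{*}(k)c=c$, where the last equality uses $c\in Z(\mathfrak{k}^{*})$. Hence $\rho(k)x\in\mu^{-1}(c)$ for all $k$, which is exactly the $K$-invariance of $\mu^{-1}(c)$.

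For the ``only if'' direction I would assume $\mu^{-1}(c)$ is $K$-invariant and pick a point $x\in\mu^{-1}(c)$. Invariance says $\rho(k)x\in\mu^{-1}(c)$ for all $k\in K$, i.e.\ $\mu(\rho(k)x)=c$; comparing this with the equivariance identity yields $\mathrm{Ad}^{*}(k)c=\mathrm{Ad}^{*}(k)\mu(x)=\mu(\rho(k)x)=c$ for all $k\in K$, which is precisely the condition $c\in Z(\mathfrak{k}^{*})$.

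The one delicate point, and the main thing to flag, is that the ``only if'' argument begins by choosing a point of $\mu^{-1}(c)$, so it tacitly requires $\mu^{-1}(c)\neq\emptyset$: an empty fiber is vacuously $K$-invariant for any $c$, whether or not $c$ is coadjoint-fixed. I would therefore state the equivalence for $c$ in the image of $\mu$ (equivalently, one restricts attention to nonempty fibers), which is exactly the regime relevant to the subsequent reduction of the special Lagrangian condition to an invariant problem on a level set. With that caveat made explicit, the proof is just the two one-line computations above.
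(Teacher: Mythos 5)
Your proof is correct, and it is exactly the argument the paper implicitly relies on: the lemma is stated there without proof, and the $K$-equivariance $\mu(\rho(k)x)=\mathrm{Ad}^{*}(k)\mu(x)$, which the paper builds into its definition of a moment map, is all that is needed, used just as you use it. Your caveat is also well taken: the ``only if'' direction genuinely requires $\mu^{-1}(c)\neq\emptyset$, since an empty fiber is $K$-invariant for every $c$, so the statement should be read as applying to $c$ in the image of $\mu$ --- which is the only case used later in the paper, where the constructed curves lie in $\hat{\mu}^{-1}(0)$ and $0\in Z(\mathfrak{k}^{*})$ automatically.
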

\noindent
The Hashimoto-Sakai's method for
cohomogeneity one
construction of (special) Lagrangian submanifolds
is based on the following two facts.
\begin{Pro}[Proposition 2.5 in \cite{HS}]\label{pro:HSpro1}
Let $L$ be a
connected $K$-invariant submanifold.
If $L$ is an isotropic submanifold $($i.e.,
$\omega|_{L}\equiv 0$$)$,
then there exists $c\in Z(\mathfrak{k}^{*})$
such that $L$ is contained in $\mu^{-1}(c)$.
\end{Pro}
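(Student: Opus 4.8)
The plan is to prove the statement in two stages: first show that the moment map $\mu$ is constant along $L$, and then identify this constant value as an element of $Z(\mathfrak{k}^{*})$.

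For the first stage, since $L$ is connected it suffices to show that the restriction $d(\mu_{X})|_{L}$ vanishes for every $X\in\mathfrak{k}$, which forces each function $\mu_{X}$ to be constant on $L$ and hence $\mu|_{L}$ to be constant. I would start from the defining relation $d\mu_{X}=\iota(X^{*})\omega$ of the moment map. Evaluating on a tangent vector $v\in T_{p}L$ at a point $p\in L$ gives $d(\mu_{X})_{p}(v)=\omega_{p}(X^{*}_{p},v)$. The geometric input needed here is that the fundamental vector field $X^{*}$ is tangent to $L$: because $L$ is $K$-invariant, the curve $t\mapsto\rho(\exp(tX))p$ stays inside $L$, so its velocity $X^{*}_{p}$ lies in $T_{p}L$. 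Then both $X^{*}_{p}$ and $v$ are tangent to $L$, and the isotropy hypothesis $\omega|_{L}\equiv0$ yields $\omega_{p}(X^{*}_{p},v)=0$. Hence $d(\mu_{X})|_{L}=0$ for all $X\in\mathfrak{k}$, and $\mu|_{L}\equiv c$ for some $c\in\mathfrak{k}^{*}$.

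For the second stage, I would use the $K$-equivariance of $\mu$, namely $\mu(\rho(k)x)=\mathrm{Ad}^{*}(k)\mu(x)$. Fix any $p\in L$ and $k\in K$. Since $L$ is $K$-invariant we have $\rho(k)p\in L$, so the first stage gives $\mu(\rho(k)p)=c=\mu(p)$. Combining this with equivariance yields $\mathrm{Ad}^{*}(k)c=c$, and as $k\in K$ was arbitrary this is exactly the condition $c\in Z(\mathfrak{k}^{*})$. Therefore $L\subset\mu^{-1}(c)$ with $c\in Z(\mathfrak{k}^{*})$, as claimed.

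The argument is essentially a short chain of substitutions once the right objects are assembled, so I do not expect a serious analytic obstacle. The one point deserving care — and the conceptual heart of the proof — is the tangency $X^{*}_{p}\in T_{p}L$, since this is precisely where the $K$-invariance of $L$ enters; everything else (constancy from a vanishing differential on a connected set, and the passage to $Z(\mathfrak{k}^{*})$ via equivariance) is formal.
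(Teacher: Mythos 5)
Your proof is correct. The paper itself does not prove this statement --- it imports it verbatim as Proposition 2.5 of \cite{HS} --- and your argument is exactly the standard one behind that cited result: $K$-invariance of $L$ gives tangency $X^{*}_{p}\in T_{p}L$, isotropy then kills $d(\mu_{X})|_{L}=\iota(X^{*})\omega|_{L}$, connectedness upgrades this to constancy $\mu|_{L}\equiv c$, and the $K$-equivariance of $\mu$ (which is part of the paper's definition of a moment map) combined with $K$-invariance of $L$ forces $\mathrm{Ad}^{*}(k)c=c$ for all $k\in K$, i.e.\ $c\in Z(\mathfrak{k}^{*})$.
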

\begin{Pro}[Proposition 2.6 in \cite{HS}]\label{pro:HSpro2}
Let $L$ be a connected
$K$-invariant
submanifold in $M$.
Assume that the action of $K$ on $L$
is of cohomogeneity one.
Then $L$ is an isotropic submanifold 
if and only if
$L \subset \mu^{-1}(c)$
for some $c\in Z(\mathfrak{k}^{*})$.
\end{Pro}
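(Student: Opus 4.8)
The plan is to notice that one implication is already available and to put all the work into the other. For the ``only if'' direction, suppose $L$ is isotropic. Then Proposition~\ref{pro:HSpro1} directly produces a $c\in Z(\mathfrak{k}^{*})$ with $L\subset\mu^{-1}(c)$, and this implication uses neither the cohomogeneity one hypothesis nor anything beyond what is already proved. Hence the entire force of the added assumption is concentrated in the converse, and that is where I would spend the argument.

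For the ``if'' direction, assume $L\subset\mu^{-1}(c)$ for some $c\in Z(\mathfrak{k}^{*})$, and aim to show $\omega|_{L}\equiv0$. First I would fix a regular point $x\in L$, that is, a point lying on a principal orbit of the $K$-action on $L$. Because $L$ is $K$-invariant, for every $X\in\mathfrak{k}$ the fundamental vector $X^{*}_{x}$ is tangent to $L$; and because the action is of cohomogeneity one, at a regular point the tangent space splits as $T_{x}L=\{X^{*}_{x}\mid X\in\mathfrak{k}\}+\mathbb{R}v$, where the first summand is the tangent space to the orbit and $v$ is a single vector transverse to the orbit inside $L$. The key observation is that $\mu$ is constant, equal to $c$, along $L$, so for every $X\in\mathfrak{k}$ the function $\mu_{X}(x)=\INN{\mu(x)}{X}=\INN{c}{X}$ is constant on $L$; therefore $(d\mu_{X})_{x}$ annihilates $T_{x}L$. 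Combined with the defining relation $\iota(X^{*})\omega=d\mu_{X}$, this gives $\omega(X^{*}_{x},u)=(d\mu_{X})_{x}(u)=0$ for every $u\in T_{x}L$.

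This computation already disposes of every pairing that involves an orbit direction. Writing two arbitrary vectors of $T_{x}L$ as orbit directions plus multiples of $v$, all cross terms vanish by the previous paragraph and the only potentially surviving term, $\omega(v,v)$, is zero by antisymmetry of $\omega$. Hence $\omega|_{T_{x}L}=0$ at every regular point $x$. To finish, I would invoke the standard fact that the regular points form an open dense subset of the connected manifold $L$; since $\omega|_{L}$ is continuous, it vanishes identically, so $L$ is isotropic.

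The step I expect to demand the most care is the dimension bookkeeping at the regular point. It is precisely the cohomogeneity one hypothesis that forces the transverse complement to the orbit inside $T_{x}L$ to be one-dimensional, so that the single leftover pairing $\omega(v,v)$ is automatically killed by antisymmetry. For actions of higher cohomogeneity the argument breaks, since $\omega$ could be nonzero on a transverse part of dimension at least two; this is exactly why Proposition~\ref{pro:HSpro1} by itself yields only one implication, and why the equivalence here genuinely needs the extra assumption.
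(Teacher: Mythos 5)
Your proof is correct, but note that this paper never proves the statement at all: it is imported verbatim from \cite{HS} (Proposition 2.6), so there is no in-paper argument to compare against. Judged on its own, your argument is the natural one and, as far as the standard proof goes, the expected one: the ``only if'' direction is indeed an immediate application of Proposition \ref{pro:HSpro1} with no need for the cohomogeneity hypothesis, and in the ``if'' direction the identity $\omega(X^{*}_{x},u)=(d\mu_{X})_{x}(u)=0$ for $u\in T_{x}L$ (valid at \emph{every} point of $L$, since $\mu_{X}$ is constant on $L$) kills all pairings involving an orbit direction, leaving only $\omega(v,v)=0$ at a regular point, where the transverse complement is one-dimensional.

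The one step you label ``standard'' is the only place where context matters: the density in $L$ of points lying on orbits of maximal dimension is the principal orbit theorem, which requires the $K$-action to be proper (e.g.\ $K$ compact). In the abstract setup of Section \ref{subsec:HSmethod}, $K$ is merely a connected Lie group, and for a non-proper action ``some orbit has codimension one'' does not force the maximal-dimension orbits to be dense; one can build a Hamiltonian $\mathbb{R}$-action on $T^{*}\mathbb{R}^{2}$ whose flow vanishes on a half-space, together with a connected invariant surface inside a level set of the moment map that is symplectic precisely over the fixed region, so the implication you prove would fail there. Since in \cite{HS} and in every application in this paper $K$ is compact (a closed subgroup of a compact isometry group), this is a caveat about hypotheses rather than a gap in your argument; it would be worth one sentence in your write-up making the appeal to the principal orbit theorem, and hence to compactness of $K$, explicit.
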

\begin{Rem}
For the Lie subgroup $K=SO(p)\times SO(n+1-p)$ of $SO(n+1)$,
Hashimoto and Sakai (\cite{HS})
applied the above method to the natural $K$-action
on $T^{*}S^{n}\cong Q^{n}$
and constructed
cohomogeneity one special Lagrangian submanifolds in $T^{*}S^{n}\cong Q^{n}$.
In construction,
a moment map $\mu: Q^{n}\to \mathfrak{k}^{*}$
for the natural $K$-action is given by
$\mu(z)X=\alpha_{Stz}'(X^{*}_{z})\,(X\in\mathfrak{k}, z\in Q^{n})$,
where $\alpha_{Stz}'$ denotes the Liouville 1-form for $\varrho_{Stz}'$.
\end{Rem}

\subsection{A cohomogeneity one action on $\mathbb{C}P^{n}$}

The classification of cohomogeneity one actions on the complex projective space was given by Takagi (\cite{T}).
According to his classification,
they are obtained essentially as the linear isotropy actions
of rank two Hermitian symmetric spaces.
In Table \ref{table:HSS},
we list rank two Hermitian orthogonal symmetric Lie algebras
of compact type.

\begin{table}[!!h]
\centering
\caption{The classification of rank two Hermitian orthogonal symmetric Lie algebras $(\mathfrak{u},\mathfrak{k})$ of compact type, where $(\mathfrak{u},\mathfrak{k})$ is an effective rank two Hermitian orthogonal symmetric Lie algebra of compact type.}\label{table:HSS}
\begin{tabular}{|c|c|c|c|}
\hline
Type&$\mathfrak{u}$ & $\mathfrak{k}$ & Remark \\\hline
AIII$+$AIII &$\mathfrak{su}(p+1)\oplus\mathfrak{su}(q+1)$ & $\mathfrak{s}(\mathfrak{u}(p)\oplus\mathfrak{u}(1))\oplus\mathfrak{s}(\mathfrak{u}(q)\oplus\mathfrak{u}(1))$ & $p\geq q\geq 1, p>1$\\\hline
AIII & $\mathfrak{su}(m+2)$ & $\mathfrak{s}(\mathfrak{u}(m)\oplus\mathfrak{u}(2))$ & $m\geq 3$ \\\hline
BDI &$\mathfrak{o}(m+2)$ & $\mathfrak{o}(m)\oplus\mathfrak{o}(2)$ &$m\geq 3$ \\\hline
DIII & $\mathfrak{o}(10)$ & $\mathfrak{u}(5)$ & \\\hline
EIII & $\mathfrak{e}_{6}$ & $\mathfrak{o}(10)\oplus\mathfrak{o}(2)$ & \\\hline
\end{tabular}
\end{table}

Notations of Lie algebras in Table \ref{table:HSS} are as follows:
\begin{itemize}
\item $\mathfrak{u}(n)$ : the set of all skew Hermitian matrices of order $n$,
\item $\mathfrak{su}(n)$ : the set of all traceless skew Hermitian matrices of order $n$,
\item $\mathfrak{s}(\mathfrak{u}(p)\oplus\mathfrak{u}(q)):=\left\{\begin{pmatrix}
A & 0\\
0 & B
\end{pmatrix}\in\mathfrak{su}(p+q)\biggm|
A\in\mathfrak{u}(p),B\in\mathfrak{u}(q),
\mathrm{Tr}\,A+\mathrm{Tr}\,B=0
\right\}$,
\item $\mathfrak{o}(n)$ : the set of skew symmetric matrices of order $n$,
\item $\mathfrak{e}_{6}$ : the compact simple Lie algebra with root type
$E_{6}$.
\end{itemize}

\medskip

In the following,
we explain the linear isotropy action
of rank two Hermitian symmetric spaces
and a cohomogeneity one action on
complex projective spaces induced from this action.

Let $(\mathfrak{u},\mathfrak{k})$ be an effective rank two Hermitian orthogonal symmetric Lie algebra of compact type, and
$\theta$ be an involutive automorphism of $\mathfrak{u}$ with $\mathfrak{k}=\mathrm{Ker}(\theta-\mathrm{id})$.
Setting $\mathfrak{p}=\mathrm{Ker}(\theta+\mathrm{id})$,
we have $\mathfrak{u}=\mathfrak{k}\oplus\mathfrak{p}$,
$[\mathfrak{k},\mathfrak{k}]\subset\mathfrak{k}, [\mathfrak{k},\mathfrak{p}]\subset\mathfrak{p}$ and $[\mathfrak{p},\mathfrak{p}]\subset\mathfrak{k}$.
We define a positive definite inner product
on $\mathfrak{p}$, which is induced from the
Killing form of $\mathfrak{u}$.
Let $K$ be an analytic subgroup of
the group $U$ of inner automorphisms of $\mathfrak{u}$
and the corresponding Lie algebra is $\mathrm{ad}(\mathfrak{k})$,
where $\mathrm{ad}$ represents the adjoint representation of $\mathfrak{u}$.
Defining an orthogonal representation $\rho: K \to O(\mathfrak{p})$ by
$\rho(k)x=k(x)$ for $k\in K$ and $x\in \mathfrak{p}$,
we have a $K$-action on $\mathfrak{p}$.
This is called the \textit{linear isotropy action}
of $U/K$ (or $(\mathfrak{u},\mathfrak{k})$).
It is shown that this $K$-action is polar
and a maximal abelian subspace
$\mathfrak{a}$ in $\mathfrak{p}$ gives a section of the $K$-action,
that is,
$\mathfrak{a}$ meets all $K$-orbits and is perpendicular to the $K$-orbits at the points of intersection (cf.~\cite{BCO}).
This implies that the orbit space for the $K$-orbit on $\mathfrak{p}$
is parametrized by elements of the quotient set $\mathfrak{a}/W$,
where $W(\subset GL(\mathfrak{a}))$ denotes the Weyl group of the restricted root system
of $(\mathfrak{u},\mathfrak{k})$ with respect to $\mathfrak{a}$.
It should be emphasized
that the cohomogeneity of the $K$-action on $\mathfrak{p}$
is equal to two,
which is equal to the rank of $(\mathfrak{u},\mathfrak{k})$.

We give cohomogeneity one action on complex projective spaces as follows.
Let $J_{0}$ be an element in the center of $\mathfrak{k}$
such that $\rho_{*}(J_{0})$ is a complex structure on $\mathfrak{p}$.
This allows us to identify $\mathfrak{p}$ with
an $(n+1)$-dimensional complex vector space
$\mathbb{C}^{n+1}$.
The definition of $J_{0}$ gives $\rho(K)\subset SU(n+1)$.
Let $S^{2n+1}$ be the unit hypersphere in $\mathfrak{p}$
centered at the origin.
Then the $K$-action on $\mathfrak{p}$
naturally induces a $K$-action on $S^{2n+1}$,
which we write the same symbol $\rho$ as before: $\rho(k)x=k(x)$
for $k\in K$ and $x\in S^{2n+1}$.
From the above, it is seen that
the $K$-action on $S^{2n+1}$
is isometric and cohomogeneity one.
This implies that the $K$-action is polar
and $\mathfrak{a}\cap S^{2n+1}$ gives a section.
We note that,
for each $X\in\mathfrak{k}$,
the fundamental vector field
$X^{*}$ is expressed by $X^{*}_{x}=\rho_{*}(X)x=\mathrm{ad}(X)x$
for $x\in S^{2n+1}$.
The $K$-action on $S^{2n+1}$ yields
the complex projective space $\mathbb{C}P^{n}$ 
with standard Fubini-Study metric of constant holomorphic sectional curvature four.
Let $\pi$ be the canonical projection from $\mathfrak{p}-\{0\}=\mathbb{C}^{n+1}-\{0\}$ onto $\mathbb{C}P^{n}$.
We use the same symbol to represent the restriction of $\pi$ to $S^{2n+1}$,
which is called \textit{Hopf fibration}.
This gives a Riemannian submersion.
Here we note that
the $1$-parameter transformation group $\{\rho(\exp tJ_{0})\}_{t\in\mathbb{R}}
(\subset \rho(K))$
determines an $S^{1}$-fiber of $\pi:S^{2n+1}\to\mathbb{C}P^{n}$.
Defining a Lie homomorphism
from  $K$ to the isometry group of $\mathbb{C}P^{n}$
which we write the same symbol $\rho$,
by $\rho(k)\pi(x)=\pi(\rho(k)x)$ for $k\in K$ and $x\in S^{2n+1}$,
the tangent space
of the $K$-orbit at $\pi(x)$
coincides with $\pi_{*}(\mathrm{ad}(\mathfrak{k})x)$.
From the above,
we can verify the $K$-action on $\mathbb{C}P^{n}$
is cohomogeneity one.
This shows that the $K$-action is polar and $\pi(\mathfrak{a}\cap S^{2n+1})$
gives a section.

\begin{Rem}
The classification of cohomogeneity one actions on the sphere
was given by Hsiang and Lawson (\cite{HsLa}).
It follows from their classification that
any cohomogeneity one action on the sphere
is realized as the linear isotropy action of rank two
Riemannian symmetric spaces.
\end{Rem}

\subsection{A Hamilton action on $T^{*}\mathbb{C}P^{n}$}\label{subsec:ConstHamaction}
Let $p'$ (resp.~$p$)
be the natural projection of $TS^{2n+1}$ (resp.~$T\mathbb{C}P^{n}$)
and $d\pi:TS^{2n+1}\to T\mathbb{C}P^{n}$ denote the differential of $\pi$.
We define a map $\Pi:Q^{2n+1}\to \mathcal{M}$ by $\Pi=\Phi\circ d\pi \circ \Phi'^{-1}$.
The above setting is summarized as
the following commutative diagram:
\begin{equation*}
\begin{CD}
S^{2n+1} @<p'<< TS^{2n+1} @>\Phi'>> Q^{2n+1}\\
@VV\pi V @VVd\pi V @VV \Pi V \\
\mathbb{C}P^{n} @<p<< T\mathbb{C}P^{n}@>\Phi>>\mathcal{M}
\end{CD}
\end{equation*}

\noindent
The $K$-action on $S^{2n+1}$
(resp.~$\mathbb{C}P^{n}$)
is naturally extended to
a $K$-action on $TS^{2n+1}$
(resp.~$T\mathbb{C}P^{n}$),
which we write $\tilde{\rho}'$ (resp.~$\tilde{\rho}$).
We define $K$-actions $\hat{\rho}'$ and $\hat{\rho}$
on $Q^{2n+1}$ and $\mathcal{M}$ as follows,
respectively:
\begin{equation*}
\hat{\rho}'(k)z=\rho(k)x+i\rho(k)y,\quad
\hat{\rho}(k)(w_{1};w_{2})=(\rho(k)w_{1};\bar{\rho}(k)w_{2})
\end{equation*}
for $k\in K$,
$z=x+iy\in Q^{2n+1}(\subset \mathbb{C}^{2n+2})$ and
$(w_{1};w_{2})\in\mathcal{M}(\subset \mathbb{C}P^{n}\times\mathbb{C}P^{n})$,
where $\bar{\rho}$ is the complex conjugate of $\rho$.
A simple calculation leads to the following result.
\begin{Lem}\label{lem:Kequiv}
The maps
$d\pi$,
$\Phi'$, $\Phi$
and $\Pi$
are $K$-equivariant maps,
respectively$:$
\begin{equation*}\label{eqn:actionequiv}
\tilde{\rho}(k)\circ d\pi=d\pi \circ \tilde{\rho}'(k),\quad
\hat{\rho}'(k)\circ\Phi'=\Phi'\circ\tilde{\rho}'(k),\quad
\hat{\rho}(k)\circ\Phi=\Phi\circ\tilde{\rho}(k),\quad
\hat{\rho}(k)\circ\Pi=\Pi\circ\hat{\rho}'(k)\quad
(k\in K).
\end{equation*}
\end{Lem}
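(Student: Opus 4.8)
The plan is to establish the four intertwining relations essentially in the order listed, treating the first three as the substantive computations and deducing the fourth formally by chasing the commutative diagram. Throughout, the crucial structural input is that each $\rho(k)$ acts $\mathbb{C}$-linearly and unitarily on $\mathbb{C}^{n+1}=\mathfrak{p}$ (since $\rho(K)\subset SU(n+1)$), hence as a real isometry of $\mathbb{R}^{2n+2}$; consequently it preserves every norm appearing in the definitions of $\Phi'$ and $\hat{\Phi}$, commutes with the scalar functions $\cosh$ and $\sinh(\cdot)/(\cdot)$ of those norms, and preserves both the Hermitian orthogonality $\xi\cdot\bar{\zeta}=0$ defining $B$ and the quadric condition defining $Q^{2n+1}$. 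I would first record that the tangent-lifted actions are the naive linear ones: since $\rho(k)$ is linear, $\tilde{\rho}'(k)(x,\xi)=(\rho(k)x,\rho(k)\xi)$ on $TS^{2n+1}$, and under the identification $T\mathbb{C}P^{n}\cong B/\mathbb{C}^{*}$ of \cite{IoIv} the action $\tilde{\rho}(k)$ is induced by $(\zeta,\xi)\mapsto(\rho(k)\zeta,\rho(k)\xi)$ on $B$.

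For the first relation I would invoke functoriality of the tangent bundle: the map $\pi$ is $K$-equivariant by its very definition $\rho(k)\pi(x)=\pi(\rho(k)x)$, and differentiating this identity by the chain rule gives $d\pi\circ\tilde{\rho}'(k)=\tilde{\rho}(k)\circ d\pi$, since $\tilde{\rho}'$ and $\tilde{\rho}$ are by construction the differentials of the $\rho$-actions.

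The second and third relations are the core computations. For $\Phi'$, applying it to $\tilde{\rho}'(k)(x,\xi)=(\rho(k)x,\rho(k)\xi)$ and using $|\rho(k)\xi|=|\xi|$ yields $\cosh(|\xi|)\rho(k)x+i(\sinh(|\xi|)/|\xi|)\rho(k)\xi$, which is exactly $\hat{\rho}'(k)$ applied to $\Phi'(x,\xi)=\cosh(|\xi|)x+i(\sinh(|\xi|)/|\xi|)\xi$, because $\hat{\rho}'(k)$ acts by $\rho(k)$ on real and imaginary parts separately; the same computation reconfirms that $\hat{\rho}'(k)$ preserves $Q^{2n+1}$. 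For $\Phi$ I would work on the cover $B$ with $\hat{\Phi}$: since $\rho(k)$ preserves $\mu=|\xi|/|\zeta|$ and is linear, the first factor of $\hat{\Phi}(\rho(k)\zeta,\rho(k)\xi)$ equals $\rho(k)$ applied to the first factor of $\hat{\Phi}(\zeta,\xi)$; and because $\overline{\rho(k)v}=\bar{\rho}(k)\bar{v}$, the second factor, built from $\bar{\zeta}$ and $\bar{\xi}$, equals $\bar{\rho}(k)$ applied to the second factor. This is precisely the definition of $\hat{\rho}(k)$, so $\hat{\Phi}$ intertwines the $B$-action with $\hat{\rho}$; passing to the $\mathbb{C}^{*}$-quotient gives $\hat{\rho}(k)\circ\Phi=\Phi\circ\tilde{\rho}(k)$ and simultaneously shows $\hat{\rho}(k)$ preserves $\mathcal{M}=\Phi(T\mathbb{C}P^{n})$.

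Finally, the fourth relation follows by substituting $\Pi=\Phi\circ d\pi\circ\Phi'^{-1}$ and chaining the three relations already proved: rewriting the second as $\Phi'^{-1}\circ\hat{\rho}'(k)=\tilde{\rho}'(k)\circ\Phi'^{-1}$, then pushing $\tilde{\rho}'(k)$ through $d\pi$ by the first relation and through $\Phi$ by the third, collapses $\Pi\circ\hat{\rho}'(k)$ to $\hat{\rho}(k)\circ\Pi$. I expect the only genuine obstacle to be bookkeeping rather than conceptual: one must consistently track the two roles of $\rho(k)$, as a real isometry on the sphere side and as a complex unitary on the $B$ and $\mathcal{M}$ side, and in particular note that the complex conjugation in the second factor of $\hat{\Phi}$ is exactly what forces the $\bar{\rho}$ in the definition of $\hat{\rho}$. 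Verifying that $\tilde{\rho}$ really is the quotient of the linear $B$-action, rather than merely asserting it, is the one point where I would pause to use the identification $T\mathbb{C}P^{n}\cong B/\mathbb{C}^{*}$.
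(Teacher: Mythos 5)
Your proposal is correct and takes essentially the same approach as the paper: the paper offers no written proof beyond the remark that ``a simple calculation leads to the following result,'' and your argument --- linearity and unitarity of $\rho(k)$ giving preservation of the norms $|\xi|$ and $\mu=|\xi|/|\zeta|$, the conjugation identity $\overline{\rho(k)v}=\bar{\rho}(k)\bar{v}$ forcing the $\bar{\rho}$ in the second factor, and the formal chase through $\Pi=\Phi\circ d\pi\circ\Phi'^{-1}$ --- is exactly that calculation carried out in detail. Your closing caution about verifying that $\tilde{\rho}$ is the quotient of the linear action on $B$ is well placed, and it checks out: since $d\pi_{\zeta}(\xi)=\frac{d}{dt}\bigm|_{t=0}[\zeta+t\xi]$, the differential of $[\zeta]\mapsto[\rho(k)\zeta]$ sends the class of $(\zeta,\xi)$ to that of $(\rho(k)\zeta,\rho(k)\xi)$, as you assumed.
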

\noindent
Moreover,
the $K$-action on $T\mathbb{C}P^{n}\cong\mathcal{M}$
preserves the Calabi-Yau structure $(J_{Stz},\omega_{Stz},\Omega_{Stz})$
invariantly.
We define a $K$-equivariant mapping
$\hat{\mu}: \mathcal{M}\to \mathfrak{k}^{*}$
by $\INN{\hat{\mu}(z;w)}{X}=\alpha_{Stz}(X^{*}_{(z;w)})$
for $(z;w)\in \mathcal{M}$ and $X\in \mathfrak{k}$.
\begin{Pro}\label{pro:HamKM}
The $K$-action $\hat{\rho}$ on $T\mathbb{C}P^{n}\cong\mathcal{M}$ is
a Hamiltonian action with moment map $\hat{\mu}$.
\end{Pro}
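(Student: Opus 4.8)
The plan is to derive the moment-map identity $\iota(X^{*})\omega_{Stz}=d\hat{\mu}_{X}$ directly from Cartan's magic formula, exploiting the $K$-invariance of the Liouville $1$-form $\alpha_{Stz}$ recorded earlier in this section. The one observation on which everything rests is that the action $\hat{\rho}$ is merely the restriction to $\rho(K)$ of the $SU(n+1)$-action $(z;w)\mapsto(gz;\bar{g}w)$ on $\mathcal{M}$: taking $g=\rho(k)\in SU(n+1)$ reproduces $\hat{\rho}(k)(z;w)=(\rho(k)z;\bar{\rho}(k)w)$ by definition. Consequently, for $X\in\mathfrak{k}$ the fundamental vector field of $\hat{\rho}$ coincides on $\mathcal{M}$ with the fundamental vector field of $\rho_{*}(X)\in\mathfrak{su}(n+1)$ for the ambient $SU(n+1)$-action, since $\rho(\exp tX)=\exp(t\rho_{*}(X))$. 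First I would make this identification precise and conclude $\mathcal{L}_{X^{*}}\alpha_{Stz}=0$ for every $X\in\mathfrak{k}$, by invoking the already-established vanishing $\mathcal{L}_{Y^{*}}\alpha_{Stz}=0$ for $Y\in\mathfrak{su}(n+1)$, applied to $Y=\rho_{*}(X)$.

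With this in hand the Hamiltonian property is immediate. Applying Cartan's formula $\mathcal{L}_{X^{*}}\alpha_{Stz}=d(\iota(X^{*})\alpha_{Stz})+\iota(X^{*})(d\alpha_{Stz})$ and using $\mathcal{L}_{X^{*}}\alpha_{Stz}=0$ together with $\omega_{Stz}=-d\alpha_{Stz}$, I obtain
\[
\iota(X^{*})\omega_{Stz}=d\bigl(\iota(X^{*})\alpha_{Stz}\bigr)=d\bigl(\alpha_{Stz}(X^{*})\bigr)=d\hat{\mu}_{X},
\]
because $\hat{\mu}_{X}(z;w)=\INN{\hat{\mu}(z;w)}{X}=\alpha_{Stz}(X^{*}_{(z;w)})$ by the definition of $\hat{\mu}$. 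In particular $\iota(X^{*})\omega_{Stz}$ is exact for every $X\in\mathfrak{k}$, so the action is Hamiltonian and $\hat{\mu}$ satisfies the defining relation of a moment map.

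It remains to confirm the $K$-equivariance of $\hat{\mu}$. Here I would use the naturality of fundamental vector fields under the diffeomorphism $\hat{\rho}(k)$, namely $\hat{\rho}(k)_{*}X^{*}=(\mathrm{Ad}(k)X)^{*}$, together with the invariance $\hat{\rho}(k)^{*}\alpha_{Stz}=\alpha_{Stz}$ (which follows from $\mathcal{L}_{X^{*}}\alpha_{Stz}=0$ and the connectedness of $K$). Writing $X^{*}_{\hat{\rho}(k)x}=\hat{\rho}(k)_{*}(\mathrm{Ad}(k^{-1})X)^{*}_{x}$ and pulling $\alpha_{Stz}$ back along $\hat{\rho}(k)$ yields $\INN{\hat{\mu}(\hat{\rho}(k)x)}{X}=\INN{\hat{\mu}(x)}{\mathrm{Ad}(k^{-1})X}$, that is, $\hat{\mu}\circ\hat{\rho}(k)=\mathrm{Ad}^{*}(k)\circ\hat{\mu}$.

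The main obstacle is not analytic but lies in the bookkeeping of the first step: one must check carefully that $\rho_{*}(\mathfrak{k})\subset\mathfrak{su}(n+1)$ and that the fundamental vector field of $X\in\mathfrak{k}$ for $\hat{\rho}$ agrees on $\mathcal{M}$ with that of $\rho_{*}(X)$ for the $SU(n+1)$-action, so that the previously computed invariance of $\alpha_{Stz}$ may legitimately be transferred to the subgroup $K$. Once this reduction is secured, the remaining steps are formal consequences of Cartan's formula and the naturality of the construction.
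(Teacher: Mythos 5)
Your proposal is correct and follows essentially the same route as the paper: Cartan's magic formula applied to $\hat{\mu}_{X}=\iota(X^{*})\alpha_{Stz}$, combined with the vanishing $\mathcal{L}_{X^{*}}\alpha_{Stz}=0$, which holds because $\hat{\rho}(K)\subset SU(n+1)$ and the invariance of $\alpha_{Stz}$ under $\mathfrak{su}(n+1)$ was established earlier. Your additional verifications --- that the fundamental vector fields of the $K$-action agree with those of the ambient $SU(n+1)$-action, and that $\hat{\mu}$ is $\mathrm{Ad}^{*}$-equivariant --- are details the paper leaves implicit (equivariance is simply built into its definition of $\hat{\mu}$), so they strengthen rather than alter the argument.
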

\begin{proof}
For each $X\in \mathfrak{k}$,
we define $\hat{\mu}_{X}\in C^{\infty}(\mathcal{M})$ by $\hat{\mu}_{X}(z;w)=\INN{\hat{\mu}(z;w)}{X}$
for $(z;w)\in\mathcal{M}$.
By definition of $\hat{\mu}$
we have $\hat{\mu}_{X}(z;w)=\alpha_{Stz}(X^{*}_{(z;w)})$
for $X\in \mathfrak{k}$ and $(z;w)\in\mathcal{M}$.
From Cartan's magic formula
we obtain
\begin{equation*}
d\hat{\mu}_{X}
=d\iota(X^{*})\alpha_{Stz}
=-\iota(X^{*})d\alpha_{Stz}+\mathcal{L}_{X^{*}}\alpha_{Stz}
=\iota(X^{*})\omega_{Stz}+\mathcal{L}_{X^{*}}\alpha_{Stz}\quad
(X\in \mathfrak{k}).
\end{equation*}
In addition, for any $X\in \mathfrak{k}$, $\mathcal{L}_{X^{*}}\alpha_{Stz}=0$ holds
because of $\hat{\rho}(K)\subset SU(n+1)$.
This implies that $\hat{\mu}:\mathcal{M}\to\mathfrak{k}^{*}$
is a moment map of $\hat{\rho}$.
\end{proof}
\noindent
In Section \ref{sec:constSLag},
we will give cohomogeneity one special Lagrangian submanifolds
in $\mathcal{M}$ by applying Proposition \ref{pro:HSpro2} to the Hamiltonian action $\hat{\rho}$.
As we shall see in Section \ref{subsec:AIIIAIII}--\ref{subsec:DIII},
the cohomogeneity one special Lagrangian submanifolds are contained
in $\hat{\mu}^{-1}(c=0)=\{(z;w)\in\mathcal{M}\mid \alpha_{Stz}(X^{*}_{(z;w)})=0,\,\forall X\in\mathfrak{k}\}$.
Indeed,
we solve
the equations $\alpha_{Stz}(X^{*}_{(z;w)})=0\,(\forall X\in\mathfrak{k})$
by means of
the formula (\ref{eqn:alphastz})
and a suitable basis of $\mathfrak{k}$.

\begin{Rem}\label{rem:slagTS2}
It is well-known that
$L=S^{2n+1}$ (regarded as $0$-section of $T^{*}S^{n}$) is a special Lagrangian
submanifold in $T^{*}S^{2n+1}$
and $\mathbb{C}P^{n}=d\pi(L)$ is also
a special Lagrangian submanifold in $T^{*}\mathbb{C}P^{n}$.
This is a particular case when
$d\pi$ preserves the special Lagrangianness of $L$.
However,
in general,
the image of a special Lagrangian submanifold in $T^{*}S^{2n+1}$ under $d\pi$ may not be special Lagrangian.
In fact,
for the cohomogeneity one
special Lagrangian submanifold $L(\subset Q^{3}\cong T^{*}S^{3})$
which is given in \cite[Theorem 3.6]{HS},
we can show that $\Pi(L)$ is not special Lagrangian in $\mathcal{M}\cong T^{*}\mathbb{C}P^{1}$, which is also identified with $Q^{2}\cong T^{*}S^{2}$ via the stereographic projection.
\end{Rem}

\section{Construction of cohomogeneity one special Lagrangian submanifolds in $T^{*}\mathbb{C}P^{n}$}\label{sec:constSLag}

In this section,
we will construct special Lagrangian submanifolds
in $T^{*}\mathbb{C}P^{n}\cong \mathcal{M}$
by means of cohomogeneity one actions
on $\mathbb{C}P^{n}$
induced from the linear isotropy actions of rank two classical Hermitian symmetric spaces.

\medskip

We summarize basic notations that are used throughout this section:
\begin{itemize}
\item $M_{n,m}(\mathbb{C})$ (resp.~$M_{n,m}(\mathbb{R})$) : the set of
complex (resp.~real) $n\times m$ matrices,
\item $E_{n}$ : the unit matrix of order $n$,
\item $E_{ij}^{(n)}$ : the matrix $(\delta_{ai}\delta_{bj})_{1\leq a,b\leq n}$.
(Here $\delta_{ab}=1$ if $a=b$, 0 otherwise.)
\item The transpose and the complex conjugate of a matrix $X$ are denoted by
${}^{t}X$ and $\bar{X}$, respectively.
\end{itemize}

\subsection{Case of $(\mathfrak{u},\mathfrak{k})=(\mathfrak{su}(p+1)\oplus\mathfrak{su}(q+1),\mathfrak{s}(\mathfrak{u}(p)\oplus\mathfrak{u}(1))\oplus\mathfrak{s}(\mathfrak{u}(q)\oplus\mathfrak{u}(1)))$}\label{subsec:AIIIAIII}

In this subsection,
we adopt the following notations:
\begin{itemize}
\item $\mathfrak{u}=\left\{\left(\begin{array}{c|c}
X & 0 \\\hline
0 & X'
\end{array}\right)\biggm| X\in\mathfrak{su}(p+1), X'\in\mathfrak{su}(q+1)\right\}$
\item $\mathfrak{k}=\left\{\left(\begin{array}{c|c}
\begin{array}{c|c}
X_{1} &       \\\hline
      & X_{2}
\end{array} &  \\\hline
 & \begin{array}{c|c}
X_{1}' &       \\\hline
      & X_{2}'
\end{array}
\end{array}\right)\in\mathfrak{u}\biggm| 
\begin{array}{c}
X_{1}\in\mathfrak{u}(1),X_{2}\in\mathfrak{u}(p),\mathrm{Tr}\,X_{1}+\mathrm{Tr}\,X_{2}=0\\
X_{1}'\in\mathfrak{u}(1),X_{2}'\in\mathfrak{u}(q),\mathrm{Tr}\,X_{1}'+\mathrm{Tr}\,X_{2}'=0
\end{array}
\right\}$
\item $\mathfrak{p}=\left\{\left(\begin{array}{c|c}
\begin{array}{c|c}
 & Z      \\\hline
-{}^{t}\bar{Z}\rule{0pt}{2.5ex}      & 
\end{array} &  \\\hline
 & \begin{array}{c|c}
 & Z'      \\\hline
-{}^{t}\bar{Z'}\rule{0pt}{2.5ex}      & 
\end{array}
\end{array}\right)\in\mathfrak{u}\biggm| 
\begin{array}{c}
Z: \text{complex $p$-row vector}\\
Z': \text{complex $q$-row vector}
\end{array}
\right\}$
\item $\mathfrak{a}$ is the set of all elements which have the form
$\left(\begin{array}{c|c}
\begin{array}{c|c}
 & A      \\\hline
-{}^{t}A      & 
\end{array} &  \\\hline
 & \begin{array}{c|c}
 & A'      \\\hline
-{}^{t}A'      & 
\end{array}
\end{array}\right)\in\mathfrak{p}$ with
\begin{equation*}
A=(a_{1},0,\ldots,0)\in M_{1,p}(\mathbb{R}),\quad
A'=(a_{1}',0,\ldots,0)\in M_{1,q}(\mathbb{R}),
\end{equation*}
\end{itemize}
where every blank entry is zero.
In this setting, it is seen that $\mathfrak{a}$ is a maximal abelian subspace of $\mathfrak{p}$.
The following element $J_{0}$ in the center of $\mathfrak{k}$
gives the complex structure $\mathrm{ad}(J_{0})$ on $\mathfrak{p}$:
\begin{equation*}
J_{0}= \left(\begin{array}{c|c}
J_{0,p} & 0\\\hline
0 & J_{0,q}'
\end{array}\right),\quad
J_{0,p}=\dfrac{i}{p+1}\left(\begin{array}{c|c}
p & \\\hline
  & -E_{p}
\end{array}\right),\quad
J_{0,q}'=\dfrac{i}{q+1}\left(\begin{array}{c|c}
q & \\\hline
  & -E_{q}
\end{array}\right).
\end{equation*}
Therefore we naturally identify $(\mathfrak{p},\mathrm{ad}(J_{0}))$ with
$\mathbb{C}^{p+q}$.
The sphere $S^{2(p+q)-1}$ can be regarded as a subset of $\mathbb{C}^{p+q}$:
\begin{equation*}
S^{2(p+q)-1}= \left\{
\left(\begin{array}{c|c}
\begin{array}{c|c}
 & Z      \\\hline
-{}^{t}\bar{Z}\rule{0pt}{2.5ex}      & 
\end{array} &  \\\hline
 & \begin{array}{c|c}
 & Z'      \\\hline
-{}^{t}\bar{Z'}\rule{0pt}{2.5ex}      & 
\end{array}
\end{array}\right)\biggm|
\begin{array}{c}
Z=(z_{1},\ldots,z_{p})\in M_{1,p}(\mathbb{C})\\
Z'=(z_{1}',\ldots,z_{q}')\in M_{1,q}(\mathbb{C})\\
|z_{1}|^{2}+\dotsb+|z_{p}|^{2}+|z_{1}'|^{1}+\dotsb+|z_{q}'|^{2}=1
\end{array}
\right\}.
\end{equation*}
Using the notation above,
$\mathfrak{a}\cap S^{2(p+q)-1}$ consists of matrices of the form
\begin{equation*}
\left(\begin{array}{c|c}
\begin{array}{c|c}
 & A_{\theta}      \\\hline
-{}^{t}A_{\theta}      & 
\end{array} &  \\\hline
 & \begin{array}{c|c}
 & A'_{\theta}      \\\hline
-{}^{t}A'_{\theta}      & 
\end{array}
\end{array}\right)
\end{equation*}
with
\begin{equation*}
A_{\theta}=(\cos\theta,0,\ldots,0)\in M_{1,p}(\mathbb{R}),\quad
A'_{\theta}=(\sin\theta,0,\ldots,0)\in M_{1,q}(\mathbb{R}),\quad\theta\in\mathbb{R}.
\end{equation*}
We consider elements
$Z_{ij}, W_{ij}\,(1\leq i<j \leq p),W_{k}\,(1\leq k\leq p-1), J_{1}$
in $\mathfrak{k}$ defined as follows:
\begin{align*}
Z_{ij}&=\left(\begin{array}{c|c}
\begin{array}{c|c}
0 & \\\hline
&-E_{ij}^{(p)}+E_{ji}^{(p)}
\end{array} & \\\hline
 &0
\end{array}\right),\\
W_{ij}&=\left(\begin{array}{c|c}
\begin{array}{c|c}
0 & \\\hline
&i(E_{ij}^{(p)}+E_{ji}^{(p)})
\end{array} & \\\hline
 &0
\end{array}\right),\quad
W_{k}=\left(\begin{array}{c|c}
\begin{array}{c|c}
0 & \\\hline
&i(E_{kk}^{(p)}-E_{k+1,k+1}^{(p)})
\end{array} & \\\hline
 &0
\end{array}\right),\\
J_{1}&=\left(\begin{array}{c|c}
J_{0,p} &\\\hline
&-J_{0,q}'
\end{array}\right).
\end{align*}
If $q>1$,
we also consider elements
$Z_{st}', W_{st}'\,(1\leq s<t \leq q),W_{l}'\,(1\leq l\leq q-1)$ in
$\mathfrak{k}$ as follows:
\begin{align*}
Z'_{st}&=\left(\begin{array}{c|c}
0 & \\\hline
 &\begin{array}{c|c}
0 & \\\hline
&-E_{st}^{(q)}+E_{ts}^{(q)}
\end{array}
\end{array}\right),\\
W_{st}'&=\left(\begin{array}{c|c}
0 & \\\hline
& \begin{array}{c|c}
0 & \\\hline
& i(E_{st}^{(q)}+E_{ts}^{(q)})
\end{array}
\end{array}\right),\quad
W_{l}'=\left(\begin{array}{c|c}
0 & \\\hline
& \begin{array}{c|c}
0 & \\\hline
& i(E_{ll}^{(q)}-E_{l+1,l+1}^{(q)})
\end{array}
\end{array}\right).
\end{align*}
Then $Z_{ij}, W_{ij}\,(1\leq i<j \leq p),W_{k}\,(1\leq k\leq p-1), Z_{st}', W_{st}'\,(1\leq s<t \leq q),W_{l}'\,(1\leq l\leq q-1)$ and $J_{a}\,(a=0,1)$
give a basis of $\mathfrak{k}$.
The $K$-orbit can be directly calculated
through
$\tilde{A}=\pi(A)\in \mathbb{C}P^{p+q-1}$
with
\begin{equation*}
A=\left(\begin{array}{c|c}
\begin{array}{c|c}
 & A_{\theta}      \\\hline
-{}^{t}A_{\theta}      & 
\end{array} &  \\\hline
 & \begin{array}{c|c}
 & A'_{\theta}      \\\hline
-{}^{t}A'_{\theta}      & 
\end{array}
\end{array}\right)\textcolor{red}{,}
\end{equation*}
where $A$ has codimension one if and only if
$\theta\not\in(\pi/2)\mathbb{Z}$.
We see that the tangent space at $\tilde{A}$
is spanned by
\begin{equation*}
(Z_{1j})^{*}_{\tilde{A}},\,(W_{1j})^{*}_{\tilde{A}}\,(2\leq j\leq p),\quad
(J_{1}')^{*}_{\tilde{A}},\quad
(Z_{1t}')^{*}_{\tilde{A}},\,(W_{1t}')^{*}_{\tilde{A}}\,(2\leq t\leq q),
\end{equation*}
where $\pi:\mathbb{C}^{p+q}-\{0\}\to\mathbb{C}P^{p+q-1}$ denotes
the canonical projection.
In this subsection,
we prove the following result.

\begin{Thm}\label{thm:slagAIIIAIII}
Let $\tau:I\to\{z\in\mathbb{C}\mid 0 < |\mathrm{Re}\,z|<\pi/2\}(\subset\mathbb{C})$ be a regular curve and $\sigma: I\to \mathcal{M}(\cong \Phi(T\mathbb{C}P^{p+q-1})$ be the curve defined by
\begin{equation*}
\sigma=(\left(\begin{array}{c|c}
\begin{array}{c|c}
 & A      \\\hline
-{}^{t}\bar{A}\rule{0pt}{2.5ex}      & 
\end{array} &  \\\hline
 & \begin{array}{c|c}
 & A'      \\\hline
-{}^{t}\bar{A'}\rule{0pt}{2.5ex}      & 
\end{array}
\end{array}\right);\left(\begin{array}{c|c}
\begin{array}{c|c}
 & A      \\\hline
-{}^{t}\bar{A}\rule{0pt}{2.5ex}      & 
\end{array} &  \\\hline
 & \begin{array}{c|c}
 & A'      \\\hline
-{}^{t}\bar{A'}\rule{0pt}{2.5ex}      & 
\end{array}
\end{array}\right)),
\end{equation*}
where $A=(\cos\tau(s),0,\dotsc,0)$
and $A'=(\sin\tau(s),0,\dotsc,0)$.
Then $L:=\{\hat{\rho}(k)\sigma(s)\mid k\in K,s\in I\}$
is a cohomogeneity one Lagrangian submanifold in $\mathcal{M}$.
Moreover,
if $\tau$ is a solution of the following ODE $(\ref{eqn:AIII+AIIIspecial})$,
then $L$ is a special Lagrangian submanifold with phase $\psi$$:$
\begin{equation}\label{eqn:AIII+AIIIspecial}
\mathrm{Im}(e^{i\psi}i^{p+q-1}\tau'(1+\tan^{2}\tau)(\tan\tau)^{2q-1})=0.
\end{equation}
\end{Thm}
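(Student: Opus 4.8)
The plan is to verify for the set $L=\{\hat{\rho}(k)\sigma(s)\mid k\in K,\,s\in I\}$ the two conditions of Proposition~\ref{pro:HLpro}: the isotropy condition $\omega_{Stz}|_{L}\equiv 0$ and the phase condition $\mathrm{Im}(e^{i\psi}\Omega_{Stz})|_{L}\equiv 0$. Since $\mathcal{M}\cong T\mathbb{C}P^{p+q-1}$ has complex dimension $2(p+q-1)$, a special Lagrangian submanifold must have real dimension $2(p+q-1)$, so the first task is to confirm that $L$ is a submanifold of this dimension on which $K$ acts with cohomogeneity one. First I would show that for $s$ with $0<|\mathrm{Re}\,\tau(s)|<\pi/2$ the base point $\pi(A)$ is a principal point, so that the $\hat{\rho}$-orbit through $\sigma(s)$ has real dimension $2(p+q-1)-1$ and is spanned by the fundamental vector fields displayed above (the count $2(p-1)+1+2(q-1)=2(p+q-1)-1$ matches); since $\tau$ is regular, $\sigma'(s)$ points in the section direction and is transverse to the orbit, giving $\dim_{\mathbb{R}}L=2(p+q-1)$ and cohomogeneity one. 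The hypothesis $0<|\mathrm{Re}\,\tau|<\pi/2$ also guarantees $\mathcal{B}\neq 0$ along $\sigma$, so that $\sigma(I)\subset\mathcal{M}$, and that $\tan\tau$ is finite and nonzero.

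For the isotropy condition I would invoke the moment-map machinery. By Proposition~\ref{pro:HamKM} the action $\hat{\rho}$ is Hamiltonian with moment map $\hat{\mu}$, and $L$ is connected, $K$-invariant and of cohomogeneity one by the previous step; hence by Proposition~\ref{pro:HSpro2} it suffices to show $L\subset\hat{\mu}^{-1}(c)$ for some $c\in Z(\mathfrak{k}^{*})$, and I claim $c=0$ works. Because $\hat{\mu}$ is $K$-equivariant and $0\in Z(\mathfrak{k}^{*})$ is fixed by $\mathrm{Ad}^{*}$, it is enough to prove $\alpha_{Stz}(X^{*}_{\sigma(s)})=0$ for every $X$ in the basis $Z_{ij},W_{ij},W_{k},Z'_{st},W'_{st},W'_{l},J_{0},J_{1}$ of $\mathfrak{k}$; the vanishing on the rest of $L$ then propagates from $\mathcal{L}_{X^{*}}\alpha_{Stz}=0$. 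This I would carry out by direct substitution into (\ref{eqn:alphastz}), using the key feature of $\sigma$ that its two $\mathbb{C}P^{p+q-1}$-components coincide, i.e. $z=w=v(\tau(s))$ with $v(\tau)=(\cos\tau,0,\dots,0,\sin\tau,0,\dots,0)$: at such a real-type point the four bracketed coefficients in (\ref{eqn:alphastz}) pair off, and their contraction with each $X^{*}_{\sigma(s)}$ cancels. This establishes $\omega_{Stz}|_{L}\equiv 0$.

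The phase condition is the analytic core and the main obstacle. Here I would compute the restriction of the holomorphic volume form (\ref{eqn:omegastz}) to $L$ by evaluating $\Omega_{Stz}$ on the adapted frame of $T_{\sigma(s)}L$ consisting of $\sigma'(s)$ together with the orbit-tangent fundamental vector fields. Writing these real tangent vectors in the inhomogeneous holomorphic frame $\partial/\partial\tilde{z}_{i},\partial/\partial\tilde{w}_{i}$ (the antiholomorphic parts being annihilated by $\Omega_{Stz}$), the form becomes a determinant times the scalar $\mathcal{B}^{-(p+q)}$, where along $\sigma$ one has $\mathcal{B}=1+\tan^{2}\tau$ in this chart. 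The reduction then amounts to showing that this determinant, combined with $\mathcal{B}^{-(p+q)}$ and the frame Jacobian, is a nowhere-zero real function of $s$ times $i^{p+q-1}\tau'(1+\tan^{2}\tau)(\tan\tau)^{2q-1}$; since a real positive scaling does not affect the vanishing of $\mathrm{Im}(e^{i\psi}\,\cdot\,)$, the condition $\mathrm{Im}(e^{i\psi}\Omega_{Stz})|_{L}\equiv 0$ then reads exactly as (\ref{eqn:AIII+AIIIspecial}).

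The hardest part will be the bookkeeping in this determinant. Each pair $(Z_{1j})^{*},(W_{1j})^{*}$ from the $\mathfrak{su}(p+1)$-block and each pair $(Z'_{1t})^{*},(W'_{1t})^{*}$ from the $\mathfrak{su}(q+1)$-block contributes a $2\times 2$ block whose entries carry powers of $\cos\tau$ and $\sin\tau$, and the factor $(\tan\tau)^{2q-1}$ should arise precisely from collecting the $\sin\tau/\cos\tau$ ratios coming from the $q$-block directions together with the single central direction. I would organise the computation by first recording each fundamental vector field in the $(\tilde{z},\tilde{w})$-chart at $\sigma(s)$, then exploiting the block-diagonal structure to factor the determinant over the two blocks, and finally tracking the power of $i$ through the identification $\mathrm{ad}(J_{0})$ of $\mathfrak{p}$ with $\mathbb{C}^{p+q}$, which is what produces the overall factor $i^{p+q-1}$.
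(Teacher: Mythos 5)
Your proposal follows essentially the same route as the paper's proof: both establish the Lagrangian condition by verifying $\alpha_{Stz}(X^{*}_{\sigma(s)})=0$ for the chosen basis of $\mathfrak{k}$ via (\ref{eqn:alphastz}) (so that $L\subset\hat{\mu}^{-1}(0)$) and then invoking Proposition \ref{pro:HSpro2}, and both derive the ODE (\ref{eqn:AIII+AIIIspecial}) by evaluating $\Omega_{Stz}$ on the frame consisting of $\sigma'$ together with $(Z_{1j})^{*},(W_{1j})^{*},(J_{1})^{*},(Z'_{1t})^{*},(W'_{1t})^{*}$ and applying Proposition \ref{pro:HLpro}. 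The only difference is presentational: the paper solves the moment-map equations for an arbitrary covector $\xi$ over a section point (finding that the solutions are exactly the real multiples of the section direction), whereas you verify the vanishing only along $\sigma$ and propagate it by $K$-equivariance, which amounts to the same computation.
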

\begin{proof}
Set $B=\{(\zeta,\xi)\in\mathbb{C}^{p+q}\times\mathbb{C}^{p+q}\mid \zeta\neq0,\xi\cdot \bar{\zeta}=0\}$
and
\begin{equation*}
\zeta=\left(\begin{array}{c|c}
\begin{array}{c|c}
 & A_{\theta}      \\\hline
-{}^{t}A_{\theta}      & 
\end{array} &  \\\hline
 & \begin{array}{c|c}
 & A'_{\theta}      \\\hline
-{}^{t}A'_{\theta}      & 
\end{array}
\end{array}\right),\quad \theta\not\in\dfrac{\pi}{2}\mathbb{Z}.
\end{equation*}
If $(\zeta,\xi)\in B$ holds,
then $\xi\in\mathbb{C}^{p+q}$ has the form:
\begin{equation*}
\xi=\left(\begin{array}{c|c}
\begin{array}{c|c}
 & X      \\\hline
-{}^{t}\bar{X}\rule{0pt}{2.5ex}      & 
\end{array} &  \\\hline
 & \begin{array}{c|c}
 & X'      \\\hline
-{}^{t}\bar{X'}\rule{0pt}{2.5ex}      & 
\end{array}
\end{array}\right),
X=(\alpha\sin\theta,\xi_{2},\ldots,\xi_{p}),
X'=(-\alpha\cos\theta,\xi_{2}',\ldots,\xi_{q}'),
\alpha,\xi_{i},\xi_{s}'\in\mathbb{C}.
\end{equation*}
Let $(z;w)=\Phi(\zeta,\xi)$.
With the use of (\ref{eqn:alphastz}), we can verify
that
\begin{align*}
\alpha&_{Stz}(X^{*}_{(z;w)})=0\quad(\forall X\in\mathfrak{k})\\
&\Longleftrightarrow
\alpha_{Stz}(Z_{1j})^{*}_{(z;w)}=0,
\alpha_{Stz}(W_{1j})^{*}_{(z;w)}=0,
\alpha_{Stz}(J_{1})^{*}_{(z;w)}=0,
\alpha_{Stz}(Z'_{1t})^{*}_{(z;w)}=0,
\alpha_{Stz}(W'_{1t})^{*}_{(z;w)}=0\\
&\Longleftrightarrow
\xi_{2}=\dotsb=\xi_{p}=0,
\xi_{2}'=\dotsb=\xi_{q}'=0,
\alpha=\lambda\in\mathbb{R},
\end{align*}
where we have used the condition
$\theta\not\in(\pi/2)\mathbb{Z}$.
Let $\tau=\tau(s)$ be a regular curve
in $\{z\in\mathbb{C}\mid 0<|\mathrm{Re}\,z|<\pi/2\}(\subset \mathbb{C})$
and
$\sigma=\sigma(s)$ be
the curve in $\mathcal{M}$
defined by
\begin{equation*}
\sigma=(\left(\begin{array}{c|c}
\begin{array}{c|c}
 & A      \\\hline
-{}^{t}\bar{A}\rule{0pt}{2.5ex}      & 
\end{array} &  \\\hline
 & \begin{array}{c|c}
 & A'      \\\hline
-{}^{t}\bar{A'}\rule{0pt}{2.5ex}      & 
\end{array}
\end{array}\right);\left(\begin{array}{c|c}
\begin{array}{c|c}
 & A      \\\hline
-{}^{t}\bar{A}\rule{0pt}{2.5ex}      & 
\end{array} &  \\\hline
 & \begin{array}{c|c}
 & A'      \\\hline
-{}^{t}\bar{A'}\rule{0pt}{2.5ex}      & 
\end{array}
\end{array}\right)),
\end{equation*}
where $A=(\cos\tau(s),0,\dotsc,0)$
and $A'=(\sin\tau(s),0,\dotsc,0)$.
Then, the curve $\sigma$ is contained in $\hat{\mu}^{-1}(0)$,
Hence it follows from Proposition \ref{pro:HSpro2} that 
$L:=\{\hat{\rho}(k)\sigma(s)\mid k\in K,s\in I\}$ is a cohomogeneity one Lagrangian
submanifold in $\mathcal{M}$.
Direct calculation yields
\begin{equation*}
\begin{split}
\Omega_{Stz}&(\sigma',(Z_{12})^{*}_{\sigma},\dotsc,(Z_{1p})^{*}_{\sigma},
(W_{12})^{*}_{\sigma},\dotsc,(W_{1q})^{*}_{\sigma},
(J_{1})^{*}_{\sigma},
(Z_{12}')^{*}_{\sigma},\dotsc,(Z_{1q}')^{*}_{\sigma},
(W_{12}')^{*}_{\sigma},\dotsc,(W_{1q}')^{*}_{\sigma})\\
&=
(-1)^{pq-1}2^{p+q}i^{p+q-1}\tau'(1+\tan^{2}\tau)(\tan\tau)^{2q-1}.
\end{split}
\end{equation*}
From Proposition \ref{pro:HLpro}
it is shown that
$L$ is a special Lagrangian submanifold
(with phase $\psi$)
if $\tau$ is a solution of $(\ref{eqn:AIII+AIIIspecial})$.
\end{proof}

\subsection{Case of $(\mathfrak{u},\mathfrak{k})=(\mathfrak{su}(m+2),\mathfrak{s}(\mathfrak{u}(m)\oplus\mathfrak{u}(2)))$}\label{subsec:HSSAIII}

In this subsection,
we adopt the following notations (\cite[p.~452]{Hel}):
\begin{itemize}
\item $\mathfrak{u}=\mathfrak{su}(m+2)$
\item $\mathfrak{k}=\left\{\begin{pmatrix}
X_{1} & 0 \\
0 & X_{2}
\end{pmatrix}\in\mathfrak{u}\biggm| X_{1}\in\mathfrak{u}(2), X_{2}\in\mathfrak{u}(m), \mathrm{Tr}\,X_{1}+\mathrm{Tr}\,X_{2}=0\right\}$
\item $\mathfrak{p}=\left\{\begin{pmatrix}
0 & X \\
-{}^{t}\bar{X} & 0
\end{pmatrix}\in\mathfrak{u}
\biggm| X : \text{$(2\times m)$-complex matrix}\right\}$
\item $\mathfrak{a}=\left\{\begin{pmatrix}
0 & A \\
-{}^{t}A & 0
\end{pmatrix}\in\mathfrak{p}\biggm|
A=\begin{pmatrix}
a_{1} & 0 & 0 & \dotsb & 0\\
0 & a_{2} & 0 & \dotsb & 0
\end{pmatrix}, a_{1},a_{2}\in\mathbb{R}\right\}
$
\end{itemize}
In this setting, $\mathfrak{a}$
is a maximal abelian subspace of $\mathfrak{p}$.
The following element $J_{0}$ in the center of $\mathfrak{k}$
gives the complex structure $\mathrm{ad}(J_{0})$ on $\mathfrak{p}$:
\begin{equation*}
J_{0}=\dfrac{i}{m+2}\left(\begin{array}{cc}
m E_{2} & 0 \\
0 & -2E_{m}
\end{array}\right).
\end{equation*}
Therefore we naturally identify $(\mathfrak{p},\mathrm{ad}(J_{0}))$ with $\mathbb{C}^{2m}$.
The sphere $S^{4m-1}$ can be regarded as a subset of
$\mathbb{C}^{2m}$:
\begin{equation*}
S^{4m-1}=\left\{
\left(\begin{array}{cc}
0 & Z \\
-{}^{t}\bar{Z} & 0
\end{array}\right)
\biggm|
\begin{array}{c}
Z=\left(\begin{array}{ccc}
z_{11} & \dotsb & z_{1m}\\
z_{21} & \dotsb & z_{2m}\\
\end{array}\right):(2\times m)\text{-complex matrix}\\
|z_{11}|^{2}+\dotsb+|z_{1m}|^{2}+|z_{21}|^{2}+\dotsb+|z_{2m}|^{2}=1
\end{array}
\right\}.
\end{equation*}
Using the notation above,
it is shown that $\mathfrak{a}\cap S^{4m-1}$
consists of matrices of the form
$
\begin{pmatrix}
0 & A_{\theta} \\
-{}^{t}A_{\theta} & 0
\end{pmatrix}
$ with
\begin{equation*}
A_{\theta}=\left(\begin{array}{ccccc}
\cos\theta & 0 & 0 & \dotsb & 0\\
0 & \sin\theta & 0 & \dotsb & 0 \\
\end{array}\right),\quad \theta\in\mathbb{R}.
\end{equation*}
We consider elements $X_{l}\,(1\leq l\leq 3)$,
$Z_{ij}, W_{ij}\,(1\leq i < j \leq m)$ and $W_{k}\,(1\leq k\leq m-1)$
in $\mathfrak{k}$
defined as follows:
\begin{align*}
X_{1}&=\left(\begin{array}{cc|ccc}
0  & -1 \\
1 & 0 \\\hline
&&&&\\
&&&0&\\
&&&&\\
\end{array}\right),\quad
X_{2}=\left(\begin{array}{cc|ccc}
0  & i \\
i & 0 \\\hline
&&&&\\
&&&0&\\
&&&&\\
\end{array}\right),\quad
X_{3}=\left(\begin{array}{cc|ccc}
i &0 \\
0 & -i \\\hline
&&&&\\
&&&0&\\
&&&&\\
\end{array}\right),\\
Z_{ij}&=\left(\begin{array}{c|ccc}
0\\\hline
\\
&&-E_{ij}^{(m)}+E_{ji}^{(m)}&\\
\\
\end{array}\right),\\
W_{ij}&=\left(\begin{array}{c|ccc}
0\\\hline
\\
&&i(E_{ij}^{(m)}+E_{ji}^{(m)})&\\
\\
\end{array}\right),\quad
W_{k}=\left(\begin{array}{c|ccc}
0\\\hline
\\
&&i(E_{kk}^{(m)}-E_{k+1,k+1}^{(m)})&\\
\\
\end{array}\right).
\end{align*}
Then
$X_{l}\,(1\leq l\leq 3)$,
$Z_{ij}, W_{ij}\,(1\leq i < j \leq m)$, $W_{k}\,(1\leq k\leq m-1)$
and $J_{0}$ give a basis of $\mathfrak{k}$.
We can confirm that
the $K$-orbit through $\tilde{A}=\pi(A)\in\mathbb{C}P^{2m-1}$ with
$A=\begin{pmatrix}
0 & A_{\theta} \\
-{}^{t}A_{\theta} & 0
\end{pmatrix}$
has codimension one if and only if $\theta\not\in(\pi/4)\mathbb{Z}$.
The tangent space at $\tilde{A}$ is spanned by
\begin{equation*}
(X_{l})^{*}_{\tilde{A}}\,(l=1,2),\quad
(Z_{ij})^{*}_{\tilde{A}}, (W_{ij})^{*}_{\tilde{A}}\,(i=1,2, i < j \leq m),\quad
(W_{2})^{*}_{\tilde{A}},
\end{equation*}
where $\pi:\mathbb{C}^{2m}-\{0\}\to \mathbb{C}P^{2m-1}$
denotes the canonical projection.
In this subsection
we prove the following result.
\begin{Thm}\label{thm:slagAIII}
Let $\tau:I \to \{z\in\mathbb{C} \mid 0 < |\mathrm{Re}\,z|<\pi/4\}(\subset \mathbb{C})$ be a regular curve
and $\sigma:I \to \mathcal{M}(\cong\Phi(T\mathbb{C}P^{2m-1}))$
be the curve defined by
\begin{equation*}
\sigma=(\left(\begin{array}{cc}
0 & X\\
-{}^{t}\bar{X}&0
\end{array}\right);\left(\begin{array}{cc}
0 & X\\
-{}^{t}\bar{X}&0
\end{array}\right))\quad
X=\left(\begin{array}{ccccc}
\cos\tau(s) & 0 & 0 & \dotsb & 0\\
0 & \sin\tau(s) & 0 & \dotsb & 0
\end{array}\right).
\end{equation*}
Then $L:=\{\hat{\rho}(k)\sigma(s)\mid k\in K, s\in I\}$
is a cohomogeneity one Lagrangian submanifold in $\mathcal{M}$.
Moreover,
if $\tau$ is a solution of the following ODE $(\ref{eqn:AIIIspecial})$,
then $L$ is a special Lagrangian submanifold with phase $\psi$$:$
\begin{equation}\label{eqn:AIIIspecial}
\mathrm{Im}\left(e^{i\psi}i^{2m-1}\tau'(\tan\tau)^{2m-3}(1+\tan^{2}\tau)^{3}(\cos 2\tau)^{2}\right)=0.
\end{equation}
\end{Thm}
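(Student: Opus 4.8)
The plan is to follow, with the obvious modifications, the proof of Theorem \ref{thm:slagAIIIAIII}, now for the pair $(\mathfrak{su}(m+2),\mathfrak{s}(\mathfrak{u}(m)\oplus\mathfrak{u}(2)))$. First I would set $B=\{(\zeta,\xi)\in\mathbb{C}^{2m}\times\mathbb{C}^{2m}\mid\zeta\neq0,\,\xi\cdot\bar\zeta=0\}$ and take $\zeta$ to be the matrix attached to a point $A_\theta\in\mathfrak{a}\cap S^{4m-1}$ with $\theta\not\in(\pi/4)\mathbb{Z}$, so that the corresponding $K$-orbit in $\mathbb{C}P^{2m-1}$ is principal. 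The single constraint $\xi\cdot\bar\zeta=0$ then determines the admissible fibre coordinate: writing $\xi$ as the $\mathfrak{p}$-type block matrix whose $(2\times m)$-part is $(\xi_{ab})$, the relation reads $\xi_{11}\cos\theta+\xi_{22}\sin\theta=0$, so that $\xi_{11}$ and $\xi_{22}$ are coupled through one complex parameter $\alpha$ (e.g.\ $\xi_{11}=\alpha\sin\theta$, $\xi_{22}=-\alpha\cos\theta$) while every remaining entry is free.

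Next I would compute $\hat\mu^{-1}(0)$ by solving $\alpha_{Stz}(X^*_{(z;w)})=0$ for all $X\in\mathfrak{k}$, where $(z;w)=\Phi(\zeta,\xi)$. Feeding the explicit basis $X_1,X_2,X_3$, $Z_{ij},W_{ij}\,(1\le i<j\le m)$, $W_k\,(1\le k\le m-1)$, $J_0$ of $\mathfrak{k}$ into formula \eqref{eqn:alphastz}, the infinitely many conditions collapse to a finite system, which I expect to force every free entry to vanish and to impose the reality condition $\alpha=\lambda\in\mathbb{R}$, just as the analogous reduction did in Theorem \ref{thm:slagAIIIAIII}. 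With $\zeta$ and $\xi$ both real the image $\Phi(\zeta,\xi)$ has equal components ($z=w$) and is readily identified with $\sigma(s)$ for a suitable complex parameter $\tau(s)$; hence $\sigma\subset\hat\mu^{-1}(0)$. Since $0\in Z(\mathfrak{k}^*)$, Proposition \ref{pro:HSpro2} then yields at once that $L=\{\hat\rho(k)\sigma(s)\mid k\in K,\,s\in I\}$ is a cohomogeneity one Lagrangian submanifold of $\mathcal{M}$.

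For the special Lagrangian condition I would invoke Proposition \ref{pro:HLpro}. As condition (i) is already secured, it remains to arrange $\mathrm{Im}(e^{i\psi}\Omega_{Stz})|_L\equiv0$; by $K$-invariance of $\Omega_{Stz}$ it suffices to test this along $\sigma$, on the frame
\begin{equation*}
\sigma',\,(X_1)^*_\sigma,\,(X_2)^*_\sigma,\,\{(Z_{ij})^*_\sigma,(W_{ij})^*_\sigma\}_{i=1,2,\,i<j\le m},\,(W_2)^*_\sigma
\end{equation*}
spanning $T_\sigma L$. Expressing these fundamental vector fields in the inhomogeneous coordinates $(\tilde z;\tilde w)$ and substituting into \eqref{eqn:omegastz}, a determinant computation should collapse the value of $\Omega_{Stz}$ on this frame to a single function of $\tau$ proportional to $i^{2m-1}\tau'(\tan\tau)^{2m-3}(1+\tan^2\tau)^3(\cos2\tau)^2$. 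Equation \eqref{eqn:AIIIspecial} is then exactly the vanishing of the imaginary part of $e^{i\psi}$ times this quantity.

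The step I expect to be the main obstacle is this last determinant evaluation. Compared with the rank-two case of Theorem \ref{thm:slagAIIIAIII}, the $\mathfrak{u}(2)$-block contributes the three extra generators $X_1,X_2,X_3$ and widens the index range to $i=1,2$ in $Z_{ij},W_{ij}$, so the frame is genuinely larger and the two rows of the matrix $X$ interact. It is precisely these cross terms between the two rows that must be tracked carefully and that produce the cubic factor $(1+\tan^2\tau)^3$ and the factor $(\cos2\tau)^2$, in place of the simpler powers appearing before. Checking that each off-diagonal fundamental vector is correctly paired with the holomorphic differentials in \eqref{eqn:omegastz}, with no spurious factor surviving, is the delicate bookkeeping; the remaining steps are routine adaptations of the preceding proof.
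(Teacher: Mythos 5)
Your proposal is correct and follows essentially the same route as the paper's proof: the same set $B$ with $\zeta$ over $A_\theta$, $\theta\not\in(\pi/4)\mathbb{Z}$, the same parametrization $\xi_{11}=\alpha\sin\theta$, $\xi_{22}=-\alpha\cos\theta$, the same reduction of $\alpha_{Stz}(X^{*})=0$ over the basis of $\mathfrak{k}$ to $\xi_{ij}=0$, $\alpha\in\mathbb{R}$, then Proposition \ref{pro:HSpro2} for the Lagrangian claim and Proposition \ref{pro:HLpro} applied to $\Omega_{Stz}$ evaluated on the frame $\sigma',(X_{1})^{*},(X_{2})^{*},(Z_{ij})^{*},(W_{ij})^{*},(W_{2})^{*}$ for the special Lagrangian claim. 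The determinant evaluation you flag as the main obstacle is exactly the step the paper performs (stating the value $2^{2m-1}i^{2m-1}\tau'(\tan\tau)^{2m-3}(1+\tan^{2}\tau)^{3}(\cos 2\tau)^{2}$, whose constant factor is immaterial for (\ref{eqn:AIIIspecial})), so no substantive difference remains.
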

\begin{proof}
Set $B=\{(\zeta,\xi)\in\mathbb{C}^{2m}\times\mathbb{C}^{2m}\mid
\zeta\neq0,\xi\cdot\bar{\zeta}=0\}$
and 
$\zeta=
\begin{pmatrix}
0 & A_{\theta} \\
-{}^{t}A_{\theta} & 0
\end{pmatrix}$ for $\theta\not\in(\pi/4)\mathbb{Z}$. 
If $(\zeta,\xi)\in B$ holds,
then $\xi\in\mathbb{C}^{2m}$ has the form:
\begin{equation*}
\xi=
\begin{pmatrix}
0 & X \\
-{}^{t}\bar{X} & 0
\end{pmatrix},\quad
X=\left(\begin{array}{ccccc}
\alpha\sin\theta & \xi_{12} & \xi_{13}  & \dotsb & \xi_{1m}\\
\xi_{21} & -\alpha\cos\theta & \xi_{23} & \dotsb & \xi_{2m}
\end{array}\right),\alpha,\xi_{ij}\in\mathbb{C}.
\end{equation*}
Let $(z;w)=\Phi(\zeta,\xi)$.
In this case, (\ref{eqn:alphastz}) yields
\begin{align*}
\alpha_{Stz}(Y^{*}_{(z;w)}&)=0\quad(\forall Y\in\mathfrak{k})\\
&\Longleftrightarrow
\alpha_{Stz}(X_{l})^{*}_{(z;w)}=0,
\alpha_{Stz}(Z_{ij})^{*}_{(z;w)}=0,
\alpha_{Stz}(W_{ij})^{*}_{(z;w)}=0,
\alpha_{Stz}(W_{2})^{*}_{(z;w)}=0\\
&\Longleftrightarrow
\xi_{ij}=0,\alpha=\lambda\in\mathbb{R},
\end{align*}
where we have used the condition
$\theta\not\in(\pi/4)\mathbb{Z}$.
Let $\tau=\tau(s)$ be a regular curve
in $\{z\in\mathbb{C} \mid 0 < |\mathrm{Re}\,z|<\pi/4\}(\subset \mathbb{C})$
and $\sigma=\sigma(s)$ be the curve in $\mathcal{M}$
defined by
\begin{equation*}
\sigma=(\left(\begin{array}{cc}
0 & X\\
-{}^{t}\bar{X}&0
\end{array}\right);\left(\begin{array}{cc}
0 & X\\
-{}^{t}\bar{X}&0
\end{array}\right))\quad
X=\left(\begin{array}{ccccc}
\cos\tau & 0 & 0 & \dotsb & 0\\
0 & \sin\tau & 0 & \dotsb & 0
\end{array}\right).
\end{equation*}
The curve $\sigma$ is contained in $\hat{\mu}^{-1}(0)$.
Hence
$L:=\{\hat{\rho}(k)\sigma(s)\mid k\in K, s\in I\}$ gives a
cohomogeneity one
Lagrangian submanifold in $\mathcal{M}$
(because of Proposition \ref{pro:HSpro2}).
With the use of above, we obtain
\begin{equation*}
\begin{split}
\Omega_{Stz}(\sigma', (X_{1})^{*}_{\sigma}, (X_{2})^{*}_{\sigma},
(Z_{12})^{*}_{\sigma},\dotsc,(Z_{2m})^{*}_{\sigma},
&(W_{12})^{*}_{\sigma},\dotsc,(W_{2m})^{*}_{\sigma},
(W_{2})^{*}_{\sigma})\\
&=2^{2m-1}i^{2m-1}\tau'(\tan\tau)^{2m-3}(1+\tan^{2}\tau)^{3}(\cos 2\tau)^{2}.
\end{split}
\end{equation*}
It follows from Proposition \ref{pro:HLpro}
that $L$ is a special Lagrangian submanifold (with phase $\psi$)
if $\tau$ is a solution of (\ref{eqn:AIIIspecial}).
\end{proof}

\subsection{Case of $(\mathfrak{u},\mathfrak{k})=(\mathfrak{o}(m+2),\mathfrak{o}(m)\oplus\mathfrak{o}(2))$}\label{subsec:BDI}

In this subsection,
we adopt the following notations (\cite[p.~453]{Hel}):
\begin{itemize}
\item $\mathfrak{u}=\mathfrak{o}(m+2)$
\item $\mathfrak{k}=\left\{\begin{pmatrix}
X_{1} & 0 \\
0 & X_{2}
\end{pmatrix}\in\mathfrak{u}\biggm| X_{1}\in\mathfrak{o}(2), X_{2}\in\mathfrak{o}(m)\right\}$
\item $\mathfrak{p}=\left\{\begin{pmatrix}
0 & X \\
-{}^{t}X & 0
\end{pmatrix}\in\mathfrak{u}\biggm| X : \text{$(2\times m)$-real matrix}\right\}$
\item $\mathfrak{a}=\left\{\begin{pmatrix}
0 & A \\
-{}^{t}A & 0
\end{pmatrix}\in\mathfrak{p}\biggm| A=\begin{pmatrix}
a_{1} & 0 & 0 & \dotsb & 0\\
0 & a_{2} & 0 & \dotsb & 0
\end{pmatrix}, a_{1},a_{2}\in\mathbb{R}\right\}$
\end{itemize}
In this setting,
$\mathfrak{a}$ is a maximal abelian subspace of $\mathfrak{p}$.
The following element $J_{0}$ in the center of $\mathfrak{k}$
gives the complex structure $\mathrm{ad}(J_{0})$ on $\mathfrak{p}$:
\begin{equation*}
J_{0}=\left(\begin{array}{cc|ccc}
0 & -1 & & &\\
1 &  0 & & &\\\hline
  &    & & &\\
  &    & \multicolumn{3}{c}{0}\\
  &    & & &
\end{array}\right).
\end{equation*}
Therefore we identify
$(\mathfrak{p},\mathrm{ad}(J_{0}))$ with $\mathbb{C}^{m}$
as follows:
\begin{equation*}
\left(\begin{array}{cc|ccc}
0       & 0       & x_{11}  & \ldots & x_{1m}\\
0       & 0       & x_{21}  & \ldots & x_{2m}\\\hline
-x_{11} & -x_{21} &         &        & \\
\vdots  & \vdots  &         & 0       & \\
-x_{1m} & -x_{2m} &         &        & \\
\end{array}\right)\in\mathfrak{p}
\longleftrightarrow
(x_{11}+ix_{21},\ldots,x_{1m}+ix_{2m})\in\mathbb{C}^{m}.
\end{equation*}
The sphere
$S^{2m-1}$ can be regarded as a subset of $\mathbb{C}^{m}$:
\begin{equation*}
S^{2m-1}=\{(z_{1},\ldots,z_{m})\in\mathbb{C}^{m}\mid |z_{1}|^{2}+\dotsb+|z_{m}|^{2}=1\}.
\end{equation*}
The above identification 
leads to
$\mathfrak{a}\cap S^{2m-1}=\{(\cos\theta,i\sin\theta,0,\ldots,0)\in\mathbb{C}^{m}\mid \theta\in\mathbb{R}\}$.
A basis of $\mathfrak{k}$ is given by $J_{0}$ and
\begin{equation*}
Y_{ij}=
\left(\begin{array}{cc|ccc}
0&0&&&\\
0&0&&&\\\hline
&&&&\\
&&&-E_{ij}^{(m)}+E_{ji}^{(m)}&\\
&&&&
\end{array}\right),\quad 1\leq i< j \leq m.
\end{equation*}
Therefore we can show that
the $K$-orbit through $\tilde{A}=\pi(A)\in \mathbb{C}P^{m-1}$ with
$A=(\cos\theta,i\sin\theta,0,\ldots,0)$
has codimension one if and only if $\theta\not\in(\pi/4)\mathbb{Z}$.
The tangent space at $\tilde{A}$ is spanned by
$(Y_{ij})^{*}_{\tilde{A}}$ for $i=1,2, i<j\leq m$,
where $\pi:\mathbb{C}^{m}-\{0\}\to\mathbb{C}P^{m-1}$ denotes the canonical projection.
In this subsection we prove the following result.

\begin{Thm}\label{thm:slagBDI}
Let $\tau:I \to \{z\in\mathbb{C}\mid 0<|\mathrm{Re}\,z|<\pi/4\}(\subset \mathbb{C})$ be a regular curve
and $\sigma:I \to \mathcal{M}(\cong\Phi(T\mathbb{C}P^{m-1}))$
be the curve defined by
\begin{equation*}
\sigma(s)=(\cos\tau(s),\sin\tau(s),0,\ldots,0;\cos\tau(s),i\sin\tau(s),0,\ldots,0).
\end{equation*}
Then $L:=\{\hat{\rho}(k)\sigma(s)\mid k\in K,s\in I\}$
is a cohomogeneity one Lagrangian submanifold in $\mathcal{M}$.
Moreover,
if $\tau$ is a solution of the following ODE $(\ref{eqn:BDIspecial})$,
then $L$ is a special Lagrangian submanifold with phase $\psi$$:$
\begin{equation}\label{eqn:BDIspecial}
\mathrm{Im}\left(e^{i\psi}i^{m-2}\tau'(\sin2\tau(s))^{m-3}\sin4\tau(s)\right)
=0.
\end{equation}
\end{Thm}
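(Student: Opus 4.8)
The plan is to follow verbatim the two-step template established in the proofs of Theorems~\ref{thm:slagAIIIAIII} and~\ref{thm:slagAIII}: first use the moment map $\hat\mu$ and Proposition~\ref{pro:HSpro2} to produce the Lagrangian condition, and then evaluate the holomorphic volume form $\Omega_{Stz}$ on an explicit frame of $L$ and invoke Proposition~\ref{pro:HLpro} to read off the special Lagrangian condition as the ODE~\eqref{eqn:BDIspecial}.

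First I would set $B=\{(\zeta,\xi)\in\mathbb{C}^{m}\times\mathbb{C}^{m}\mid\zeta\neq0,\ \xi\cdot\bar\zeta=0\}$ and take the section point $\zeta=(\cos\theta,i\sin\theta,0,\dotsc,0)$ with $\theta\notin(\pi/4)\mathbb{Z}$, which is the generic element of $\mathfrak{a}\cap S^{2m-1}$ under the identification $(\mathfrak{p},\mathrm{ad}(J_{0}))\cong\mathbb{C}^{m}$ fixed above. Solving $\xi\cdot\bar\zeta=0$ shows that the first two entries of $\xi$ are tied to a single complex parameter $\alpha$ (so that $\xi_{1}\cos\theta=i\xi_{2}\sin\theta$), while $\xi_{3},\dotsc,\xi_{m}$ remain free. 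Writing $(z;w)=\Phi(\zeta,\xi)$ and substituting the fundamental vector fields $(Y_{ij})^{*}$ into~\eqref{eqn:alphastz}, I expect the system $\alpha_{Stz}(Y^{*}_{(z;w)})=0$ $(\forall Y\in\mathfrak{k})$ to collapse, exactly as in the previous two cases, to the conditions $\xi_{3}=\dotsb=\xi_{m}=0$ and $\alpha=\lambda\in\mathbb{R}$; the hypothesis $\theta\notin(\pi/4)\mathbb{Z}$ is what keeps the relevant coefficients nonzero so that this is an equivalence rather than merely an implication.

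Consequently the curve $\sigma(s)$, whose cotangent parameter is real, lies in $\hat\mu^{-1}(0)$, and since $0\in Z(\mathfrak{k}^{*})$ Proposition~\ref{pro:HSpro2} gives at once that $L=\{\hat\rho(k)\sigma(s)\mid k\in K,\ s\in I\}$ is a cohomogeneity one Lagrangian submanifold. For the special Lagrangian part it suffices, by the $K$-invariance of $\Omega_{Stz}$ recorded in Subsection~\ref{subsec:ConstHamaction} together with the $K$-homogeneity of the slices, to verify condition (ii) of Proposition~\ref{pro:HLpro} at the points $\sigma(s)$. There the tangent space $T_{\sigma}L$ is spanned by $\sigma'$ and the $(Y_{ij})^{*}_{\sigma}$ with $i=1,2$, $i<j\le m$ (note that $(J_{0})^{*}$ vanishes identically on $\mathcal{M}$, being the Hopf direction, so it contributes nothing), a frame of the correct length $2m-2=\dim_{\mathbb{C}}\mathcal{M}$. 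Evaluating $\Omega_{Stz}$ on this frame should produce a nonzero constant multiple of $i^{m-2}\tau'(\sin 2\tau)^{m-3}\sin 4\tau$, and then Proposition~\ref{pro:HLpro} makes $L$ special Lagrangian with phase $\psi$ precisely when the imaginary part of $e^{i\psi}$ times this expression vanishes, i.e.\ when~\eqref{eqn:BDIspecial} holds.

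The main obstacle is this final volume-form computation. In practice I would pass to the inhomogeneous coordinates $(\tilde z;\tilde w)$, express each $(Y_{ij})^{*}_{\sigma}$ and $\sigma'$ in the holomorphic frame $\partial/\partial\tilde z_{k},\partial/\partial\tilde w_{k}$, and evaluate $\Omega_{Stz}$ with its prefactor $\mathcal{B}^{-(m+1)}$ at $\sigma$, where $\mathcal{B}=1+\tilde z\cdot\tilde w$ simplifies drastically because only the first two coordinate pairs of $\sigma$ are nonzero. The bookkeeping is heavier here than in the AIII case because the real structure of $\mathfrak{o}(m)\oplus\mathfrak{o}(2)$ couples the two active coordinates through $J_{0}$; the power $(\sin 2\tau)^{m-3}$ arising from the $m-2$ passive coordinate directions and the identity $\sin 4\tau=2\sin 2\tau\cos 2\tau$ must be tracked carefully to land on the stated normalization and, in particular, on the exponent $m-2$ of $i$, which is one less than the exponent appearing in the two preceding theorems.
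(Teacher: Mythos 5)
You follow the same strategy as the paper (moment\,-map reduction via Proposition \ref{pro:HSpro2}, then evaluation of $\Omega_{Stz}$ on a $K$-frame and Proposition \ref{pro:HLpro}), but your middle step contains a genuine error, at exactly the point where the BDI case differs from the two cases you are imitating. The moment-map equations do \emph{not} reduce to $\alpha=\lambda\in\mathbb{R}$: the paper's proof obtains $\xi_{3}=\dotsb=\xi_{m}=0$ together with $\alpha=i\lambda\in i\mathbb{R}$, purely imaginary. The reason is that here the section $\mathfrak{a}\cap S^{2m-1}$ sits inside $\mathbb{C}^{m}$ as $(\cos\theta,i\sin\theta,0,\dotsc,0)$, so the direction normal to the $K$-orbit at $\zeta$ (the tangent $(-\sin\theta,i\cos\theta,0,\dotsc,0)$ to the section) corresponds to $\alpha\in i\mathbb{R}$ in the parametrization $\xi=(\alpha i\sin\theta,\alpha\cos\theta,0,\dotsc,0)$, whereas for real $\alpha$ the vector $\xi=\alpha(i\sin\theta,\cos\theta,0,\dotsc,0)$ is proportional to the horizontal component of $(Y_{12})^{*}_{\zeta}$, i.e.\ tangent to the orbit. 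Concretely, at $(z;w)=\Phi\bigl(\zeta,\lambda(i\sin\theta,\cos\theta,0,\dotsc,0)\bigr)$ with $\lambda\in\mathbb{R}$ one computes that $\alpha_{Stz}\bigl((Y_{12})^{*}_{(z;w)}\bigr)$ equals a nowhere-vanishing positive factor times $\sinh(2\lambda)\cos(2\theta)$, and $\cos2\theta\neq0$ is guaranteed by $\theta\notin(\pi/4)\mathbb{Z}$; so these points lie in $\hat{\mu}^{-1}(0)$ only when $\lambda=0$, your claimed equivalence is false, and the Lagrangian half of your argument collapses. The correct fiber direction $\alpha=i\lambda$ produces, via $\Phi$, the curve $\sigma=(\cos\tau,i\sin\tau,0,\dotsc,0;\cos\tau,-i\sin\tau,0,\dotsc,0)$ with $\tau=\theta+i\lambda$, and this is the curve the paper's proof actually works with; the expression printed in the statement of the theorem is inconsistent with the proof (evidently a misprint), and your proposal inherits it uncritically by asserting, without verification, that the printed $\sigma$ is the image of a ``real cotangent parameter'' line.

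Beyond this, the decisive step for the special Lagrangian condition --- evaluating $\Omega_{Stz}$ on the frame $\sigma',(Y_{ij})^{*}_{\sigma}$ --- is not carried out; you only assert that it ``should produce'' the stated expression, so even after repairing the Lagrangian part the proposal is not a proof. Your closing remark about the exponent of $i$ does, however, point at something real: the paper's proof computes
\begin{equation*}
\Omega_{Stz}\bigl(\sigma',(Y_{12})^{*}_{\sigma},\dotsc,(Y_{2m})^{*}_{\sigma}\bigr)
=i^{m-1}\tau'\sin^{m-3}(2\tau)\sin(4\tau),
\end{equation*}
with exponent $m-1$ as the pattern of Theorems \ref{thm:slagAIIIAIII} and \ref{thm:slagAIII} suggests, which differs from the $i^{m-2}$ in (\ref{eqn:BDIspecial}) by one factor of $i$ (equivalently, a shift of $\psi$ by $\pi/2$). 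A complete proof must actually perform this computation and settle that normalization, rather than aim for the printed exponent.
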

\begin{proof}
Set $B=\{(\zeta,\xi)\in\mathbb{C}^{m}\times\mathbb{C}^{m}\mid \zeta\neq0,\xi\cdot\bar{\zeta}=0\}$ and $\zeta=(\cos\theta,i\sin\theta,0,\ldots,0)\in\mathfrak{a}\cap S^{2m-1}$ for $\theta\not\in(\pi/4)\mathbb{Z}$.
If $(\zeta,\xi)\in B$ holds,
then $\xi=(\alpha i\sin\theta,\alpha\cos\theta,\xi_{3},\ldots,\xi_{m})$
for $\xi_{i}, \alpha\in\mathbb{C}$.
Let $(z;w)=\Phi(\zeta,\xi)$.
By using (\ref{eqn:alphastz}), we can verify
that
\begin{equation*}
\alpha_{Stz}(X^{*}_{(z;w)})=0\,(\forall X\in\mathfrak{k})
\Longleftrightarrow
\alpha_{Stz}(Y_{ij})^{*}_{(z;w)}=0
\Longleftrightarrow
\xi_{i}=0,\alpha=i\lambda\in i\mathbb{R},
\end{equation*}
where we have used the condition
$\theta\not\in(\pi/4)\mathbb{Z}$.
Let $\tau=\tau(s)$
be a regular curve
in $\{z\in\mathbb{C}\mid 0< |\mathrm{Re}\,z|<\pi/4\}(\subset \mathbb{C})$
and $\sigma=\sigma(s)$ be the curve in $\mathcal{M}$
defined by
\begin{equation*}
\sigma=
(\cos\tau,i\sin\tau,0,\ldots,0;\cos\tau,-i\sin\tau,0,\ldots,0).
\end{equation*}
The curve of $\sigma$ is contained in $\hat{\mu}^{-1}(0)$.
Hence
$L:=\{\hat{\rho}(k)\sigma(s)\mid k\in K, s\in I\}$
gives a Lagrangian submanifold in $\mathcal{M}$
(because of Proposition \ref{pro:HSpro2}).
From the above results, we obtain
\begin{equation*}
\Omega_{Stz}(\sigma',
(Y_{12})^{*}_{\sigma},\dotsc,(Y_{2m})^{*}_{\sigma})
=i^{m-1}\tau'\sin^{m-3}(2\tau)\sin(4\tau).
\end{equation*}
Hence it follows from Proposition \ref{pro:HLpro}
that, for a solution $\tau$ of (\ref{eqn:BDIspecial}),
$L$ is a special Lagrangian submanifold (with phase $\psi$).
\end{proof}

\subsection{Case of $(\mathfrak{u},\mathfrak{k})=(\mathfrak{o}(10),\mathfrak{u}(5))$}\label{subsec:DIII}

In this subsection,
we adopt the following notations (\cite[p.~453]{Hel}):
\begin{itemize}
\item $\mathfrak{u}=\mathfrak{o}(10)$
\item $\mathfrak{k}=\left\{\begin{pmatrix}
X_{1} & X_{2} \\
-X_{2} & X_{1}
\end{pmatrix}\in\mathfrak{u}\biggm|
\begin{tabular}{l}
$X_{1}\in\mathfrak{o}(5)$,\\
$X_{2}$: symmetric real $(5\times 5)$-matrix
\end{tabular}
\right\}$
\item $\mathfrak{p}=\left\{\begin{pmatrix}
X_{1} & X_{2} \\
X_{2} & -X_{1}
\end{pmatrix}\in\mathfrak{u}\biggm| X_{1}, X_{2}\in \mathfrak{o}(5)\right\}$
\item $\mathfrak{a}$ is the set
of all elements which have the form
$\begin{pmatrix}
A & 0\\
0 & -A
\end{pmatrix}\in\mathfrak{p}$ with
\begin{equation*}
A=\left(\begin{array}{cc|cc|c}
0&a_{1}&0&0&0\\
-a_{1}&0&0&0&0\\\hline
0&0&0&a_{2}&0\\
0&0&-a_{2}&0&0\\\hline
0&0&0&0&0
\end{array}\right),\quad a_{1},a_{2}\in \mathbb{R}.
\end{equation*}
\end{itemize}
In this setting, $\mathfrak{a}$ is a maximal abelian subspace of $\mathfrak{p}$.
The following element $J_{0}$ in the center of $\mathfrak{k}$
gives the complex structure $\mathrm{ad}(J_{0})$ on $\mathfrak{p}$:
\begin{equation*}
J_{0}=\dfrac{1}{2}\left(\begin{array}{c|c}
0 & E_{5}\\\hline
-E_{5} & 0
\end{array}\right).
\end{equation*}
Hence we identify $(\mathfrak{p},\mathrm{ad}(J_{0}))$ with $\mathbb{C}^{10}$
as follows:
\begin{equation*}
\left(\begin{array}{cc}
Y & X\\
X & -Y
\end{array}\right)
\in\mathfrak{p}
\longleftrightarrow
\left(\begin{array}{cc}
0 & X+iY\\
X+iY & 0
\end{array}\right)
\in\mathbb{C}^{10}.
\end{equation*}
The sphere
$S^{19}$ can be regarded as a subset of $\mathbb{C}^{10}$:
\begin{equation*}
S^{19}=\left\{
\left(\begin{array}{cc}
0 & Z \\
Z & 0
\end{array}\right)
\biggm|
\begin{array}{l}
Z=(z_{ij})\in\mathfrak{o}(5,\mathbb{C}),\\
|z_{12}|^{2}+|z_{13}|^{2}+\dotsb+|z_{45}|^{2}=1
\end{array}
\right\}.
\end{equation*}
Under the above identification, it is shown that
$\mathfrak{a}\cap S^{19}$ consists of matrices of the form
$\begin{pmatrix}
0 & A_{\theta} \\
A_{\theta} & 0
\end{pmatrix}$
with 
\begin{equation*}
A_{\theta}=\left(\begin{array}{cc|cc|c}
0&i\cos\theta&0&0&0\\
-i\cos\theta&0&0&0&0\\\hline
0&0&0&i\sin\theta&0\\
0&0&-i\sin\theta&0&0\\\hline
0&0&0&0&0
\end{array}\right),\quad \theta\in \mathbb{R}.
\end{equation*}
We consider elements $\tilde{Z}_{ij},\tilde{W}_{ij}\,(1\leq i < j \leq 5)$
and $\tilde{W}_{k}\,(1\leq k \leq 4)$
in $\mathfrak{k}$
defined as follows:
\begin{align*}
\tilde{Z}_{ij} &=\left(\begin{array}{cc}
-E_{ij}^{(5)}+E_{ji}^{(5)} & 0\\
0& -E_{ij}^{(5)}+E_{ji}^{(5)}
\end{array}\right),\\
\tilde{W}_{ij} &=\left(\begin{array}{cc}
0&E_{ij}^{(5)}+E_{ji}^{(5)}\\
-(E_{ij}^{(5)}+E_{ji}^{(5)})&0
\end{array}\right),\\
\tilde{W}_{k} &=\left(\begin{array}{cc}
0&E_{kk}^{(5)}-E_{k+1,k+1}^{(5)}\\
-(E_{kk}^{(5)}-E_{k+1,k+1}^{(5)})&0\\
\end{array}\right).
\end{align*}
Then $\tilde{Z}_{ij},\tilde{W}_{ij}\,(1\leq i < j \leq 5)$,
$\tilde{W}_{k}\,(1\leq k \leq 4)$ and $J_{0}$
give a basis of $\mathfrak{k}$.
We can verify
that
the $K$-orbit through $\tilde{A}=\pi(A)\in\mathbb{C}P^{9}$
with
$A=\begin{pmatrix}
0 & A_{\theta} \\
A_{\theta} & 0
\end{pmatrix}$
has codimension one if and only if $\theta\not\in(\pi/4)\mathbb{Z}$.
The tangent space at $\tilde{A}$
is spanned by
\begin{equation*}
\begin{cases}
(\tilde{Z}_{ij})^{*}_{\tilde{A}},\,(\tilde{W}_{ij})^{*}_{\tilde{A}}&
(i,j)=(1,3),(1,4),(1,5),(2,3),(2,4),(2,5),(3,5),(4,5),\\
(\tilde{W}_{2})^{*}_{\tilde{A}},
\end{cases}
\end{equation*}
where $\pi:\mathbb{C}^{10}-\{0\}\to\mathbb{C}P^{9}$
denotes the canonical projection.
In this subsection, we prove the following result.

\begin{Thm}\label{thm:slagDIII}
Let $\tau:I\to\{z\in\mathbb{Z}\mid 0 < |\mathrm{Re}\,z|<\pi/4\}(\subset \mathbb{C})$ be a regular curve and $\sigma:I\to\mathcal{M}(\cong\Phi(T\mathbb{C}P^{9}))$
be the curve defined by
$\sigma(s)=(\begin{pmatrix}
0 & Z \\
Z & 0
\end{pmatrix};\begin{pmatrix}
0 & W \\
W & 0
\end{pmatrix})$, where
\begin{align*}
Z&=\left(\begin{array}{cc|cc|c}
0&i\cos\tau(s)&0&0&0\\
-i\cos\tau(s)&0&0&0&0\\\hline
0&0&0&i\sin\tau(s)&0\\
0&0&-i\sin\tau(s)&0&0\\\hline
0&0&0&0&0
\end{array}\right),\\
W&=\left(\begin{array}{cc|cc|c}
0&-i\cos\tau(s)&0&0&0\\
i\cos\tau(s)&0&0&0&0\\\hline
0&0&0&-i\sin\tau(s)&0\\
0&0&i\sin\tau(s)&0&0\\\hline
0&0&0&0&0
\end{array}\right).
\end{align*}
Then $L:=\{\hat{\rho}(k)\sigma(s)\mid k\in K, s\in I\}$ is a cohomogeneity one Lagrangian submanifold in $\mathcal{M}$.
Moreover, if $\tau$ is a solution of the following ODE
$(\ref{eqn:DIIIspecial})$,
then $L$ is a special Lagrangian submanifold with phase $\psi$$:$
\begin{equation}\label{eqn:DIIIspecial}
\mathrm{Im}(e^{i\psi}i\tau'(1-\tan^{2}\tau)^{4}(1+\tan^{2}\tau)(\tan\tau)^{5})=0.
\end{equation}
\end{Thm}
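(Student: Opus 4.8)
The plan is to follow exactly the template established in the proofs of Theorems~\ref{thm:slagAIIIAIII}--\ref{thm:slagBDI}, since the case $(\mathfrak{u},\mathfrak{k})=(\mathfrak{o}(10),\mathfrak{u}(5))$ is structurally identical. The argument splits into two independent halves: first showing that $L$ is a cohomogeneity one \emph{Lagrangian} submanifold, and second extracting the special-Lagrangian ODE~(\ref{eqn:DIIIspecial}) from the vanishing of $\mathrm{Im}(e^{i\psi}\Omega_{Stz})$ on $L$. For the Lagrangian half, I would invoke Proposition~\ref{pro:HSpro2}: it suffices to exhibit the generating curve $\sigma$ as lying in $\hat\mu^{-1}(0)$, i.e.\ to verify that $\alpha_{Stz}(X^{*}_{(z;w)})=0$ for all $X\in\mathfrak{k}$ along $\sigma$. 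Since $K$ acts with cohomogeneity one on each orbit and $\sigma$ is a section-type curve, this reduces to checking the condition on the explicit basis $\tilde{Z}_{ij},\tilde{W}_{ij},\tilde{W}_{k},J_{0}$ already constructed.

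\textbf{Step 1: the orthogonality computation in $B$.}
First I would set $B=\{(\zeta,\xi)\in\mathbb{C}^{10}\times\mathbb{C}^{10}\mid \zeta\neq0,\ \xi\cdot\bar\zeta=0\}$, take $\zeta=\begin{pmatrix}0 & A_{\theta}\\ A_{\theta} & 0\end{pmatrix}$ with $\theta\not\in(\pi/4)\mathbb{Z}$, and determine the general form of $\xi$ satisfying $\xi\cdot\bar\zeta=0$. The skew-symmetry of the $\mathfrak{o}(5,\mathbb{C})$ blocks, together with the specific $2\times2$ block structure of $A_{\theta}$ (entries $i\cos\theta$ in the $(1,2)$-block and $i\sin\theta$ in the $(3,4)$-block), will constrain $\xi$ to a form with one free real-scaling parameter $\alpha$ and the remaining off-diagonal entries $\xi_{ij}$ free. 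The key claim to verify, in parallel with the earlier cases, is that
\begin{equation*}
\alpha_{Stz}(X^{*}_{(z;w)})=0\ (\forall X\in\mathfrak{k})
\Longleftrightarrow
\xi_{ij}=0\ \text{(for the off-section entries)},\ \alpha\in i\mathbb{R}
\end{equation*}
where $(z;w)=\Phi(\zeta,\xi)$; here the condition $\theta\not\in(\pi/4)\mathbb{Z}$ is what makes $\cos\theta,\sin\theta$ and $\cos2\theta$ nonzero, so that the linear system forces each $\xi_{ij}=0$. Evaluating the chosen $W$ in the statement against this, the generating curve $\sigma$ indeed satisfies $\alpha_{Stz}(X^{*}_{\sigma})=0$, so $\sigma\subset\hat\mu^{-1}(0)$ and Proposition~\ref{pro:HSpro2} delivers the Lagrangian conclusion.

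\textbf{Step 2: the holomorphic volume computation.}
For the special-Lagrangian condition, by Proposition~\ref{pro:HLpro} it remains only to impose $\mathrm{Im}(e^{i\psi}\Omega_{Stz})|_{L}\equiv0$, and since $L$ is $K$-homogeneous transverse to $\sigma$, this is a pointwise condition along $\sigma$. Using the spanning set of the tangent space, I would compute
\begin{equation*}
\Omega_{Stz}\bigl(\sigma',(\tilde{Z}_{ij})^{*}_{\sigma},(\tilde{W}_{ij})^{*}_{\sigma},(\tilde{W}_{2})^{*}_{\sigma}\bigr)
\end{equation*}
via the formula~(\ref{eqn:omegastz}), pushing the fundamental vector fields forward through $\Phi$ to inhomogeneous coordinates $(\tilde z,\tilde w)$ and evaluating the resulting $n\times n$ determinant together with the prefactor $1/\mathcal{B}^{n+1}$. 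I expect this to collapse, after the trigonometric simplifications (products of $\cos\theta,\sin\theta,\cos2\theta$ combining into $(1-\tan^2\tau)^4(1+\tan^2\tau)(\tan\tau)^5$), to a scalar multiple of $i\,\tau'(1-\tan^{2}\tau)^{4}(1+\tan^{2}\tau)(\tan\tau)^{5}$. Setting the imaginary part of $e^{i\psi}$ times this quantity to zero yields~(\ref{eqn:DIIIspecial}).

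\textbf{Main obstacle.}
The genuine difficulty is entirely in Step~2: organizing the determinant of the pushed-forward frame $\{(\tilde{Z}_{ij})^{*},(\tilde{W}_{ij})^{*},(\tilde{W}_{2})^{*}\}$ and the tangent $\sigma'$ in the Leray-type coordinates, and then carrying out the trigonometric bookkeeping so that all the Jacobian factors and the $1/\mathcal{B}^{n+1}$ weight combine into the clean closed form. Because of the $\mathfrak{o}(5)$ block structure, the bracket action $\mathrm{ad}(\tilde{Z}_{ij})A$ and $\mathrm{ad}(\tilde{W}_{ij})A$ on the section point $A$ mixes the two $2\times2$ blocks in a way that is more intricate than in the BDI case, so the principal care is to verify that the off-diagonal contributions to the determinant cancel and only the diagonal $(1-\tan^2\tau)$ and $(\tan\tau)$ factors survive. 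Everything else is a direct, if lengthy, substitution into~(\ref{eqn:alphastz}) and~(\ref{eqn:omegastz}).
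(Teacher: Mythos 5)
Your proposal follows essentially the same route as the paper's own proof: the paper likewise takes $\zeta=\bigl(\begin{smallmatrix}0 & A_{\theta}\\ A_{\theta} & 0\end{smallmatrix}\bigr)$ with $\theta\not\in(\pi/4)\mathbb{Z}$, solves $\xi\cdot\bar\zeta=0$, shows via (\ref{eqn:alphastz}) that $\alpha_{Stz}(X^{*}_{(z;w)})=0$ for all $X\in\mathfrak{k}$ exactly when $\xi_{ij}=0$ and $\alpha\in i\mathbb{R}$, concludes $\sigma\subset\hat\mu^{-1}(0)$ and invokes Proposition~\ref{pro:HSpro2}, and then evaluates $\Omega_{Stz}$ on $\sigma'$ and the frame $(\tilde{Z}_{ij})^{*},(\tilde{W}_{ij})^{*},(\tilde{W}_{2})^{*}$ to get $2^{10}i(1-\tan^{2}\tau)^{4}(1+\tan^{2}\tau)(\tan\tau)^{5}$ (times $\tau'$), so that Proposition~\ref{pro:HLpro} yields the ODE (\ref{eqn:DIIIspecial}). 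Your two steps, the role of the condition $\theta\not\in(\pi/4)\mathbb{Z}$, and the anticipated closed form of the determinant all agree with the paper.
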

\begin{proof}
Set $B=\{(\zeta,\xi)\in\mathbb{C}^{10}\times\mathbb{C}^{10}\mid\zeta\neq0,\xi\cdot\bar{\zeta}=0\}$ and $\zeta=\begin{pmatrix}
0 & A_{\theta} \\
A_{\theta} & 0
\end{pmatrix}$ for $\theta\not\in(\pi/4)\mathbb{Z}$.
If $(\zeta,\xi)\in B$ holds,
then
\begin{equation*}
\xi=\left(\begin{array}{cc}
0 & X\\
X & 0
\end{array}\right),\quad
X=\left(\begin{array}{cc|cc|c}
0&\alpha\sin\theta&\xi_{13}&\xi_{14}&\xi_{15}\\
-\alpha\sin\theta&0&\xi_{23}&\xi_{24}&\xi_{25}\\\hline
-\xi_{13}&-\xi_{23}&0&-\alpha\cos\theta&\xi_{35}\\
-\xi_{14}&-\xi_{24}&\alpha\cos\theta&0&\xi_{45}\\\hline
-\xi_{15}&-\xi_{25}&-\xi_{35}&-\xi_{45}&0
\end{array}\right)\in\mathfrak{o}(5,\mathbb{C}),\alpha,\xi_{ij}\in\mathbb{C}.
\end{equation*}
Let $(z;w)=\Phi(\zeta,\xi)$.
With the use of
 (\ref{eqn:alphastz}), we have
\begin{align*}
\alpha_{Stz}(X^{*}_{(z;w)})=0\,(\forall X\in\mathfrak{k})&\Longleftrightarrow
\alpha_{Stz}(\tilde{Z}_{ij})^{*}_{(z;w)}=0,\,
\alpha_{Stz}(\tilde{W}_{ij})^{*}_{(z;w)}=0,\,
\alpha_{Stz}(\tilde{W}_{2})^{*}_{(z;w)}=0\\
&\Longleftrightarrow
\xi_{ij}=0,\,
\alpha=i\lambda\in i\mathbb{R},
\end{align*}
where we have used the condition
$\theta\not\in(\pi/4)\mathbb{Z}$.
Let $\tau=\tau(s)$ be a regular curve
in $\{z\in\mathbb{C}\mid 0<|\mathrm{Re}\,z|<\pi/4\}(\subset\mathbb{C})$
and $\sigma=\sigma(s)$ be the curve in $\mathcal{M}$ defined by
$\sigma(s)=(\begin{pmatrix}
0 & Z \\
Z & 0
\end{pmatrix};\begin{pmatrix}
0 & W \\
W & 0
\end{pmatrix})$ with
\begin{align*}
Z&=\left(\begin{array}{cc|cc|c}
0&i\cos\tau(s)&0&0&0\\
-i\cos\tau(s)&0&0&0&0\\\hline
0&0&0&i\sin\tau(s)&0\\
0&0&-i\sin\tau(s)&0&0\\\hline
0&0&0&0&0
\end{array}\right),\\
W&=\left(\begin{array}{cc|cc|c}
0&-i\cos\tau(s)&0&0&0\\
i\cos\tau(s)&0&0&0&0\\\hline
0&0&0&-i\sin\tau(s)&0\\
0&0&i\sin\tau(s)&0&0\\\hline
0&0&0&0&0
\end{array}\right).
\end{align*}
The curve $\sigma$ is contained in $\hat{\mu}^{-1}(0)$.
Hence
$L:=\{\hat{\rho}(k)\sigma(s)\mid k\in K, s\in I\}$
gives a cohomogeneity one Lagrangian submanifold in
$\mathcal{M}$ (because of Proposition \ref{pro:HSpro2}).
From the above results, we obtain
\begin{equation*}
\Omega_{Stz}(\sigma',(\tilde{Z}_{13})^{*}_{\sigma}, \dotsc,
(\tilde{Z}_{45})^{*}_{\sigma},
(\tilde{W}_{13})^{*}_{\sigma}, \dotsc,
(\tilde{W}_{45})^{*}_{\sigma},
(\tilde{W}_{2})^{*}_{\sigma})=
2^{10}i(1-\tan^{2}\tau)^{4}(1+\tan^{2}\tau)(\tan\tau)^{5}.
\end{equation*}
Hence
it follows from Proposition \ref{pro:HLpro}
that,
for a solution $\tau$ of (\ref{eqn:DIIIspecial}),
$L$ is a special Lagrangian submanifold
(with phase $\psi$).
\end{proof}

\subsection*{Future direction}
We have constructed
cohomogeneity one special Lagrangian submanifolds
in $T^{*}\mathbb{C}P^{n}$,
which are obtained by
the linear isotropy actions of
the classical Hermitian symmetric spaces as in Table \ref{table:HSS}.
The rest of the work is the case when $(\mathfrak{u},\mathfrak{k})=(\mathfrak{e}_{6},\mathfrak{o}(10)\oplus\mathfrak{o}(2))$.
Though it is worth of applying the same method to the exceptional case, there is another interesting method.
Hashimoto-Mashimo (\cite{HM}) proposed
another systematic method to construct cohomogeneity one
special Lagrangian submanifolds in the cotangent bundle
over the sphere,
which is based on the geometry of restricted root system.
We will develop their method to the case of $T^{*}\mathbb{C}P^{n}$
and construct a cohomogeneity one special Lagrangian submanifold
for the linear isotropy action of
$(\mathfrak{u},\mathfrak{k})=(\mathfrak{e}_{6},\mathfrak{o}(10)\oplus\mathfrak{o}(2))$.

\begin{bibdiv}
\begin{biblist}
\bib{An}{article}{
   author={H.~Anciaux},
   title={Special Lagrangian submanifolds in the complex sphere},
   journal={Ann. Fac. Sci. Toulouse Math. (6)},
   volume={16},
   date={2007},
   number={2},
   pages={215--227},
}
\bib{BCO}{book}{
   author={J.~Berndt},
   author={S.~Console},
   author={C.~Olmos},
   title={Submanifolds and holonomy},
   series={Chapman \& Hall/CRC Research Notes in Mathematics},
   publisher={Chapman \& Hall/CRC, Boca Raton, FL},
   date={2003},
}

\bib{HaLa}{article}{
   author={R.~Harvey},
   author={H. B.~Lawson},
   title={Calibrated geometries},
   journal={Acta Math.},
   volume={148},
   date={1982},
   pages={47--157},
}
\bib{Hel}{book}{
   author={S.~Helgason},
   title={Differential geometry, Lie groups, and symmetric spaces},
   series={Graduate Studies in Mathematics},
   publisher={American Mathematical Society, Providence, RI},
   date={2001},
}
\bib{HsLa}{article}{
   author={W.~Hsiang},
   author={H.~B.~Lawson},
   title={Minimal submanifolds of low cohomogeneity},
   journal={J. Differential Geometry},
   volume={5},
   date={1971},
   pages={1--38},
}
\bib{HS}{article}{
   author={K.~Hashimoto},
   author={T.~Sakai},
   title={Cohomogeneity one special Lagrangian submanifolds in the cotangent
   bundle of the sphere},
   journal={Tohoku Math. J. (2)},
   volume={64},
   date={2012},
   number={1},
   pages={141--169},
}
\bib{HM}{article}{
   author={K.~Hashimoto},
   author={K.~Mashimo},
   title={Special Lagrangian submanifolds invariant under the isotropy
   action of symmetric spaces of rank two},
   journal={J. Math. Soc. Japan},
   volume={68},
   date={2016},
   number={2},
   pages={839--862},
}
\bib{IoIv}{article}{
   author={M.~Ionel},
   author={T.~A.~Ivey},
   title={Austere submanifolds in $\Bbb CP^n$},
   journal={Comm. Anal. Geom.},
   volume={24},
   date={2016},
   number={4},
   pages={821--841},
}
\bib{IoMO}{article}{
   author={M.~Ionel},
   author={M.~Min-Oo},
   title={Cohomogeneity one special Lagrangian 3-folds in the deformed and
   the resolved conifolds},
   journal={Illinois J. Math.},
   volume={52},
   date={2008},
   number={3},
   pages={839--865},
}

\bib{Jo}{article}{
   author={D.~Joyce},
   title={Special Lagrangian $m$-folds in $\Bbb C^m$ with symmetries},
   journal={Duke Math. J.},
   volume={115},
   date={2002},
   number={1},
   pages={1--51},
}

\bib{Jotext}{book}{
   author={D.~Joyce},
   title={Riemannian holonomy groups and calibrated geometry},
   series={Oxford Graduate Texts in Mathematics},
   volume={12},
   publisher={Oxford University Press, Oxford},
   date={2007},
}

\bib{KaMO}{article}{
   author={S.~Karigiannis},
   author={M.~Min-Oo},
   title={Calibrated subbundles in noncompact manifolds of special holonomy},
   journal={Ann. Global Anal. Geom.},
   volume={28},
   date={2005},
   number={4},
   pages={371--394},
}

\bib{Le}{article}{
   author={T.-C.~Lee},
   title={Complete Ricci flat K\"ahler metric on $M^n_{\rm I}$, $M^{2n}_{\rm
   II}$, $M^{4n}_{\rm III}$},
   journal={Pacific J. Math.},
   volume={185},
   date={1998},
   number={2},
   pages={315--326},
}
\bib{St}{thesis}{
 author={M.~B.~Stenzel},
 title={K\"ahler Structures on the cotangent bundles of real analytic riemannian manifolds},
 school={Ph.D.~Thesis, Massachusetts Institute of Technology},
 date={1990}
}
\bib{St93}{article}{
   author={M.~B.~Stenzel},
   title={Ricci-flat metrics on the complexification of a compact rank one
   symmetric space},
   journal={Manuscripta Math.},
   volume={80},
   date={1993},
   number={2},
   pages={151--163},
}
\bib{SYZ}{article}{
   author={Strominger, A},
   author={Yau, S.-T.},
   author={Zaslow, E.},
   title={Mirror symmetry is $T$-duality},
   journal={Nuclear Phys. B},
   volume={479},
   date={1996},
   number={1-2},
   pages={243--259},
}		
\bib{Sz}{article}{
   author={R.~Sz\H oke},
   title={Complex structures on tangent bundles of Riemannian manifolds},
   journal={Math. Ann.},
   volume={291},
   date={1991},
   number={3},
   pages={409--428},
}
\bib{T}{article}{
   author={R.~Takagi}
   title={On homogeneous real hypersurfaces in a complex projective space},
   journal={Osaka J. Math.},
   volume={10},
   date={1973},
   pages={495--506},
}
\end{biblist}
\end{bibdiv}

\end{document}